\newcommand{\bbR}{\mathbb{R}}
\newcommand{\calC}{\mathcal{C}}
\newcommand{\calH}{\mathcal{H}}
\newcommand{\calP}{\mathcal{P}}
\newcommand{\calQ}{\mathcal{Q}}
\newcommand{\caldQ}{\dot{\mathcal{Q}}}
\newcommand{\calU}{\mathcal{U}}
\newcommand{\ext}{\mathop{\mathrm{ext}}}
\newcommand{\diff}{\mathrm{d}}
\newcommand{\dt}{\diff t}
\newcommand{\dd}[2][{}]{\frac{\diff#1}{\diff#2}}
\newcommand{\ddt}{\dd[]t}
\newcommand{\pp}[2][{}]{\frac{\partial#1}{\partial#2}}
\newcommand\pp*[2][{}]{\partial#1/\partial#2}
\newcommand{\dq}{\dot{q}}
\newcommand{\dQ}{\dot{Q}}
\newcommand{\dP}{\dot{P}}
\newcommand{\leg}{\mathop{\mathrm{leg}}\nolimits}
\newcommand{\cf}{\emph{cf.}}
\newcommand{\eg}{\emph{e.g.}}
\newcommand{\ie}{\emph{i.e.}}
\newcommand{\quand}{\quad\text{and}\quad}
\newcommand{\qquand}{\qquad\text{and}\qquad}
\newtheorem{theorem}{Theorem}[section]
\newtheorem{proposition}[theorem]{Proposition}
\theoremstyle{definition}
\theoremstyle{remark}
\newtheorem{remark}[theorem]{Remark}
\newtheorem{problem}[theorem]{Problem}
\newtheorem{example}[theorem]{Example}
\title[HOVI in optimal control]{High order variational integrators in the optimal control of mechanical systems}
\author[C. M. Campos]{Cédric M. Campos}
\address{IMUVA, Universidad de Valladolid, 47011 Valladolid, Spain; Instituto de Ciencias Matemáticas, 28049 Madrid, Spain}
\email{cedricmc@\{uva,icmat\}.es}
\author[S. Ober-Blöbaum]{Sina Ober-Blöbaum}
\address{Department of Mathematics, University of Paderborn, 33098 Paderborn, Germany}
\email{sinaob@math.upb.de}
\author[E. Trélat]{Emmanuel Trélat}
\address{Sorbonne Universités, UPMC Univ. Paris 06, CNRS UMR 7598, Laboratoire; Jacques-Louis Lions, Institut Universitaire de France, F-75005, Paris, France}
\email{emmanuel.trelat@upmc.fr}
\subjclass{Primary 65P10, Secondary 65L06, 65K10, 49Mxx}
\keywords{optimal control, mechanical systems, geometric integration, variational integrator, high order, Runge-Kutta, direct methods, commutation property}
\begin{document}
\begin{abstract}
In recent years, much effort in designing numerical methods for the simulation and optimization of mechanical systems has been put into schemes which are structure preserving. One particular class are variational integrators which are momentum preserving and symplectic. In this article, we develop two high order variational integrators which distinguish themselves in the dimension of the underling space of approximation and we investigate their application to finite-dimensional optimal control problems posed with mechanical systems. The convergence of state and control variables of the approximated problem is shown. Furthermore, by analyzing the adjoint systems of the optimal control problem and its discretized counterpart, we prove that, for these particular integrators, dualization and discretization commute.
\end{abstract}
\maketitle

\section{Introduction}\label{sec:intro}
In practice, solving an optimal control problem requires the a priori choice of a numerical method. Many approaches do exist, that are either based on a direct discretization of the optimal control problem, resulting into a nonlinear programming problem, or based on the application of the Pontryagin Maximum Principle, reducing into a boundary value problem. The first class of approaches are called direct, and the second ones, based on the preliminary use of the Pontryagin maximum principle, are called indirect. It can be noted that indirect methods, although very precise, suffer from an important drawback: Unless one already has a good knowledge of the optimal solution, they are very difficult to initialize since they are extremely sensitive to the initial guess. Although many solutions exist in order to carefully initialize a shooting method (see \cite{Tr05,Tr12}), in most of engineering applications direct approaches are preferred due to their simplicity and robustness. Roughly speaking, direct methods consist of
\begin{enumerate}
\item discretizing first the cost and the differential system, in order to reduce the optimal control problem to a usual nonlinear minimization problem with constraints, also called nonlinear programming problem (NLP), with dimension inversely proportional to the smoothness of the discretization;
\item and then dualizing, by applying for instance a Lagrange-Newton method to the NLP, deriving the Karush-Kuhn-Tucker equations (KKT), also called discrete adjoint system, and applying a Newton method to solve the resulting optimality system.
\end{enumerate}
Many variants exist, \eg\  \cite{Betts98}).
In contrast,  indirect methods consist of
\begin{enumerate}
\item first dualizing the optimal control problem to derive the adjoint system, by applying the Pontryagin Maximum Principle (PMP) (or, equivalently, the Lagrange multipliers necessary condition for optimality in infinite dimension),
\item and then discretizing, by applying a shooting method (that is, a Newton method composed with a numerical integration method).
\end{enumerate}
In shorter words, direct methods consist of 1) discretize, 2) dualize, and indirect methods consist of the converse: 1) dualize, 2) discretize. It is natural to wonder whether this diagram is commutative or not, under usual approximation assumptions.

Since the pioneering works of \cite{Ha00, Ro05}, it is well known by now that, in spite of usual assumptions ensuring consistency and stability, direct methods may diverge. In other words discretization and dualization do not commute in general. This is due to a complex interplay with the mesh, to the appearance of spurious highfrequencies ultimately causing the divergence (see \cite{Ha00} for very simple finite-dimensional linear quadratic problems and \cite{Zu05} for infinite-dimensional wave equations).

Several remedies and solutions exist, from which \cite{Bo06,GoRoKaFa08,Ha00,RoFa03} are a representative sample. For instance, the results of \cite{Ha00} assert the convergence of optimal control problems under specific smoothness and coercivity assumptions provided that the underlying discretization method be based on a Runge-Kutta method. However, the convergence order of the optimal control solution, which depends on the convergence order of the state and the resulting adjoint scheme, is reduced compared to the order of the Runge-Kutta method applied to the state system. Indeed, the discrete state and adjoint system constitutes a symplectic partitioned Runge-Kutta method for which order conditions on the Runge-Kutta coefficients are derived to preserve the convergence rates. Whereas in \cite{Ha00} the symplectic partitioned Runge-Kutta scheme for state and adjoint is explicitly derived, in a recent overview article \cite{SS14} a proof is given based on a relation between quadratic invariants (that are also naturally preserved by symplectic partitioned Runge-Kutta integrators) and the fulfillment of the KKT equations. The preservation of convergence rates is referred to as the Covector Mapping Principle (CMP) (see \eg\ \cite{GoRoKaFa08}). More precisely, the CMP is satisfied if there exists an order-preserving map between the adjoint variables corresponding to the dualized discrete problem (KKT) and the discretized dual problem (discretized PMP). For the class of Legendre pseudospectral methods the CMP is proven if additional closure conditions are satisfied (see \cite{GoRoKaFa08, RoFa03}), whereas for Runge-Kutta methods the CMP holds if the order conditions on the Runge-Kutta coefficients derived in \cite{Ha00} are satisfied.
For a detailed discussion on the commutation properties we refer to \cite{Ro05}.

While for general dynamical systems, many studies of discretizations of optimal control problems are based on Runge-Kutta methods (see \eg\ \cite{DoHa01,DoHa00,Ha01,HPS13,Wa07}), particularly for mechanical systems, much effort in designing numerical methods for the simulation and optimization of such systems has been put into schemes which are structure preserving in the sense that important qualitative features of the original dynamics are preserved in its discretization (for an overview on structure preserving integration methods see \eg\ \cite{HaLuWa02}). One special class of structure preserving integrators is the class of variational integrators, introduced in \cite{MaWe01,Su90}, which are symplectic and momentum-preserving and have an excellent long-time energy behavior. Variational integrators are based on a discrete variational formulation of the underlying system, \eg\ based on a discrete version of Hamilton's principle or of the Lagrange-d'Alembert principle for conservative \cite{LMOW04,LMOWe04} or dissipative \cite{Kane00} mechanical systems, respectively. They have been further extended to different kind of systems and applications, \eg\ towards constrained \cite{CoMa01,KoMaSu10,leyendecker07-2}, non smooth \cite{FMOW03}, multirate and multiscale \cite{LO12,StGr09,TaOwMa2010}, Lagrangian PDE systems \cite{Lew03, MPS98} and electric circuits \cite{OTCOM13}. In the cited works, typically quadrature rules of first or second order are used in order to approximate the action functional of the system. To design high order variational integrators, higher order quadrature rules based on polynomial collocation can be employed. Such so called Galerkin variational integrators have been introduced in \cite{MaWe01} and further studied in \cite{Ca13,HaLe13,Leok2011,O14,OS14}.

For numerically solving optimal control problems by means of a direct method, the use of variational integrators for the discretization of the problem has been proposed in \cite{CaJuOb12,DMOCC,ObJuMa10}. This approach, denoted by DMOC (Discrete Mechanics and Optimal Control), yields a finite-difference type discretization of the dynamical constraints of the problem which by construction preserves important structural properties of the system, like the evolution of momentum maps associated to symmetries of the Lagrangian or the energy behavior. For one class of Galerkin variational integrators, that is equivalent to the class of symplectic partitioned Runge-Kutta methods, the adjoint system and its convergence rates are analyzed in \cite{ObJuMa10}. It is shown that, in contrast to a discretization based on standard Runge-Kutta methods in \cite{Ha00}, the convergence order of the discrete adjoint system is the same as for the state system due to the symplecticity of the discretization scheme. In particular, we obtain the same symplectic-momentum scheme for both state and adjoint system, that means that discretization and dualization commute for this class of symplectic schemes and the CMP is satisfied.
For general classes of variational integrators, the commutation property is still an open question.
The contribution of this work is twofold:
\begin{enumerate}
\item We derive two different kinds of high order variational integrators based on different dimensions of the underling polynomial approximation (Section~\ref{sec:discrete}).
While the first well-known integrator is equivalent to a symplectic partitioned Runge-Kutta method, the second integrator, denoted as symplectic Galerkin integrator, yields a ``new'' method which in general, cannot be written as a standard symplectic Runge-Kutta scheme.
\item For the application of high order variational integrators to finite-dimensional optimal control problems posed with mechanical systems, we show the convergence of state and control variables and prove the commutation of discretization and dualization (Sections \ref{sec:conv} and \ref{sec:conmutation}).
\end{enumerate}
The paper is structured as follows: In Section~\ref{sec:ocmech} the optimal control problem for a mechanical system is introduced. Its discrete version is formulated in Section~\ref{sec:discrete} based on the derivation of two different kinds of high order variational integrators. The first main result is stated in Section~\ref{sec:conv}: Under specific assumptions on the problem setting we prove the convergence of the primal variables for an appropriate choice of the discrete controls (Theorem~\ref{thm:control:conv}). Along the lines of \cite{Ha01}, we demonstrate the influence of the control discretization on the convergence behavior by means of several examples. In Section~\ref{sec:conmutation} the discrete adjoint system for the symplectic Galerkin method is derived. It turns out that the reformulation of the variational scheme in Section~\ref{sec:discrete} simplifies the analysis. Whereas commutation of discretization and dualization for symplectic partitioned Runge-Kutta methods has already been shown in \cite{ObJuMa10}, we prove the same commutation property for the symplectic Galerkin discretization (Theorem~\ref{th:commutation}), which is the second main result of this work. In contrast to the discretization with Legendre pseudospectral methods or classical Runge-Kutta methods, no additional closure conditions (see \cite{GoRoKaFa08}) or conditions on the Runge-Kutta coefficients (see \cite{Ha00}) are required to satisfy the CMP, respectively. Furthermore, using the high order variational integrators presented here, not only the order but also the discretization scheme itself is preserved, \ie\ one yields the same schemes for the state and the adjoint system. We conclude with a summary of the results and an outlook for future work in Section~\ref{sec:conclusion}.

\section{Optimal control for mechanical systems}\label{sec:ocmech}

\subsection{Lagrangian dynamics}\label{subsec:Lag}
One of the main subjects of Geometric Mechanics is the study of dynamical systems governed by a Lagrangian. Typically, one considers a mechanical system with \emph{configuration manifold} $Q\subseteq \bbR^n$ together with a \emph{Lagrangian function} $L\colon TQ\to\bbR$, where the associated \emph{state space} $TQ$ describes the position $q$ and velocity $\dq$ of a particle moving in the system. Usually, the Lagrangian takes the form of kinetic minus potential energy, $ L(q,\dq) = K(q,\dq) - V(q) = \frac12\,\dq^T\cdot M(q)\cdot\dq - V(q)$, for some (positive definite) \emph{mass matrix} $M(q)$.

A consequence of the \emph{principle of least action}, also known as \emph{Hamilton's principle}, establishes that the natural motions $q\colon[0,T]\to Q$ of the system are characterized by stationary solutions of the \emph{action}, thus, motions satisfying
\begin{equation}\label{eq:principle.hamilton}
\delta \int_0^T L(q(t),\dq(t))\,\dt = 0
\end{equation}
for zero initial and final variations $\delta q(0)=\delta q(T) =0$. The resulting equations of motion are the \emph{Euler-Lagrange equations} (refer to \cite{AbMa78}),
\begin{equation} \label{eq:EL-equation}
\ddt\pp[L]{\dq} - \pp[L]{q} = 0 \,.
\end{equation}
When the Lagrangian is \emph{regular}, that is when the velocity Hessian matrix $\pp*[^2L]{\dq^2}$ is non-degenerate, the Lagrangian induces a well defined map, the \emph{Lagrangian flow}, $F_L^t\colon TQ\to TQ$ by $F_L^t(q_0,\dq_0) := (q(t),\dq(t))$, where $q\in\calC^2([0,T],Q)$ is the unique solution of the Euler-Lagrange equation \eqref{eq:EL-equation} with initial condition $(q_0,\dq_0)\in TQ$. By means of the \emph{Legendre transform} $\leg_L:(q,\dq)\in TQ\mapsto (q,p=\pp[L]{\dq}|_{(q,\dq)})\in T^*Q$, where $T^*Q$ is the \emph{phase space} of positions $q$ and momenta $p$, one may transform the Lagrangian flow into the \emph{Hamiltonian flow} $F_H^t\colon T^*Q\to T^*Q$ defined by $F_H^t(q_0,p_0) := \leg_L(q(t),\dq(t))$.

Moreover, different preservation laws are present in these systems. For instance the Hamiltonian flow preserves the natural symplectic structure of $T^*Q$ and the total energy of the system, typically $H(q,p) = K(q,p) + V(q) = \frac12\,p^T\cdot M(q)^{-1}\cdot p - V(q)$ (here $K$ still denotes the kinetic energy, but depending on $p$ rather than on $\dq$). Also, if the Lagrangian possess Lie group symmetries, then \emph{Noether's theorem} asserts that the associated momentum maps are conserved, like for instance the linear momentum and/or the angular momentum.

If external (non conservative) forces $F\colon (q,\dq)\in TQ \mapsto (q,F(q,\dq))\in T^*Q$ are present in the system, Hamilton's principle~\eqref{eq:principle.hamilton} is replaced by the \emph{Lagrange-d'Alembert principle} seeking for curves that satisfy the relation
\begin{equation} \label{eq:principle.dalembert}
\delta \int_0^T L(q,\dq) \,\dt +  \int_0^T F(q,\dq) \cdot \delta q \,\dt = 0
\end{equation}
for zero boundary variations $\delta q(0)=\delta q(T) =0$, where the second term is denoted as \emph{virtual work}. This principle leads to the \emph{forced Euler-Lagrange equations}
\begin{equation} \label{eq:ELF-equation}
 \ddt\pp[L]{\dq}-\pp[L]{q} = F(q,\dq) \,.
\end{equation}
The forced version of Noether's theorem (see \eg\ \cite{MaWe01}) states that if the force acts orthogonal to the symmetry action, then momentum maps are still preserved by the flow. Otherwise, the change in momentum maps and energy is determined by the amount of forcing in the system.

\subsection{Optimal control problem}\label{subsec:ocp}

Since we are interested in optimally controlling Lagrangian systems, we assume that the mechanical system may be driven by means of some time dependent control parameter $u:[0,T]\rightarrow U$ with $U \subset \bbR^m$ being the \emph{control set}. Typically, the control appears as an extra variable in the external force such that in the following we consider forces of the form $F: TQ \times U \rightarrow T^*Q$ and we replace the right-hand side of \eqref{eq:ELF-equation} by the control dependent force term $F(q,\dq,u)$.

An optimal control problem for a mechanical system reads (also denoted as Lagranigan optimal control problem in \cite{ObJuMa10})

\begin{problem}[Lagrangian optimal control problem (LOCP)]\label{prob:locp}
\begin{subequations}\label{eq:LOCP}
\begin{equation}\label{eq:LOCP:cost}
\min_{q, \dq, u} J(q,\dq,u) = \int_0^T C(q(t),\dq(t),u(t))\,\dt  + \Phi(q(T),\dq(T))
\end{equation}
subject to \begin{eqnarray}
\delta \int_0^T L(q(t),\dq(t)) \, \dt + \int_0^T F(q(t),\dq(t),u(t))\cdot \delta q(t) \, \dt &=& 0, \label{eq:LOCP_dA}\\
\label{eq:LOCP3} (q(0),\dq(0)) &=& (q^0,\dq^0),\label{eq:LOCP_ini}
\end{eqnarray}
\end{subequations}
with minimization over $q\in C^{1,1}([0,T],Q)= W^{2,\infty}([0,T],Q)$, $\dq \in W^{1,\infty}([0,T],T_qQ)$ and $u\in L^\infty([0,T],U)$.
The interval length $T$ may either be fixed, or appear as degree of freedom in the optimization problem. Since any optimal control problem with free final time can easily be transformed into a problem with fixed final time (see \eg\ \cite{Gerdts12}), we assume the time $T$ to be fixed from now on. The control set $U\subset\bbR^m$ is assumed to be closed and convex, and the \emph{density cost function} $C\colon TQ\times U\mapsto \bbR$ and the \emph{final cost function} $\Phi\colon TQ \mapsto \bbR^n$ are continuously differentiable, being $\Phi$ moreover bounded from below.
\end{problem}

Henceforth we should assume that the Lagrangian is regular, \ie\ there is a (local) one-to-one correspondence between the velocity $\dq$ and the momentum $p$ via the Legendre transform and its inverse
\[
p = \pp[L]{\dq}(q,\dq) \quand \dq = \left(\pp[L]{\dq}\right)^{-1}(q,p).
\]
Thus, the forced Euler-Lagrange equations \eqref{eq:ELF-equation} can be transformed into the partitioned system
\begin{equation}\label{eq:partsys}
\dq(t)= f(q(t),p(t))\,, \quad \dot{p}(t)= g(q(t),p(t),u(t))
\end{equation}
with
\begin{equation}\label{eq:fg}
f(q,p) =  \left(\pp[L]{\dq}\right)^{-1}(q,p)\quand g(q,p,u)=\pp[L]q\left(q, f(q,p)\right) + F\left(q,f(q,p),u\right).
\end{equation}
With some abuse of notation we denote the force and the cost functions defined on $T^*Q\times U$ and $T^*Q$, respectively, by $F(q,p,u):=F(q,f(q,p),u)$, $C(q,p,u):=C(q,f(q,p),u)$ and $\Phi(q,p):=\Phi(q,f(q,p))$ such that Problem~\ref{prob:locp} can be formulated as an optimal control problem for the partitioned system \eqref{eq:partsys}.

\begin{problem}[Optimal control problem (OCP)]\label{prob:ocp}
\begin{subequations}\label{eq:OCP}
\begin{equation}\label{eq:OCP:cost}
\min_{q, p, u} J(q,p,u) = \int_0^T C(q(t),p(t),u(t))\,\dt  + \Phi(q(T),p(T))
\end{equation}
subject to \begin{align}
\dq(t)&= f(q(t),p(t))\,, \quad q(0)=q^0\,, \label{eq:OCP2}\\
\dot{p}(t)&= g(q(t),p(t),u(t))\,, \quad p(0)=p^0, \label{eq:OCP3}
\end{align}
\end{subequations}
with minimization over $q\in  W^{2,\infty}([0,T],Q)$, $p \in W^{1,\infty}([0,T],T_q^*Q)$ and $u\in L^\infty([0,T],U)$ and the functions $f\colon T^*Q \mapsto \bbR^n$, $g\colon T^*Q \times U \mapsto \bbR^n$ are assumed to be Lipschitz continuous.
\end{problem}

The first order necessary optimality conditions can be derived by means of the Hamiltonian for the optimal control problem given by
\begin{equation}
\calH(q,p,u,\lambda,\psi,\rho_0) = \rho_0 C(q,p,u) + \lambda\cdot f(q,p) + \psi\cdot g(q,p,u)
\end{equation}
with $\rho_0 \in \bbR$ and $\lambda$ and $\psi$ are covectors in $\bbR^n$.

\begin{theorem}[Minimum Principle, \eg\  \cite{Gerdts12}]\label{th:pontryagin}
Let $(q^*,p^*,u^*)\in W^{2,\infty}([0,T],\allowbreak Q) \times W^{1,\infty}([0,T],T^*_{q^*}Q) \times L^{\infty}([0,T],U)$ be an optimal solution to Problem~\ref{prob:ocp}. Then there exist functions $\lambda \in W^{1,\infty}([0,T],\bbR^n)$ and $\psi \in W^{1,\infty}([0,T],\bbR^n)$ and
a constant $\rho_0\ge0$ satisfying $(\rho_0,\lambda,\psi)\not=(0,0,0)$ for all $t\in [0,T]$ such that
\begin{subequations}\label{eq:minprinc}
\begin{equation}\label{eq:min_ham}
\calH(q^*(t),p^*(t),u^*(t),\lambda(t),\psi(t),\rho_0) = \min\limits_{u\in U} \calH(q(t),p(t), u,\lambda(t),\psi(t),\rho_0)\,,
\end{equation}
for $t\in[0,T]$, and $(\rho_0,\lambda,\psi)$ solves the following initial value problem:
\begin{align}
\dot{\lambda}& = -\nabla_q\calH(q^*,p^*,u^*,\lambda,\psi,\rho_0), &\lambda(T) =\rho_0 \nabla_q\Phi(q^*(T),p^*(T)),\label{eq:pontq}\\
\dot{\psi}& = -\nabla_p\calH(q^*,p^*,u^*,\lambda,\psi,\rho_0), &\psi(T) = \rho_0 \nabla_p\Phi(q^*(T),p^*(T)).\label{eq:pontp}
\end{align}
\end{subequations}
\end{theorem}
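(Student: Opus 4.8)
The plan is to establish this statement by the classical route for a Mayer problem — needle (spike) variations together with a conical separation argument — which is precisely the framework underlying \cite{Gerdts12}; under the smoothness assumptions of Problem~\ref{prob:ocp} an acceptable alternative is simply to quote that reference. First I would pass to a pure Mayer problem by adjoining the cost state $y$ with $\dot y = C(q,p,u)$, $y(0)=0$, so that $J = y(T) + \Phi(q(T),p(T))$ becomes a terminal cost of the augmented trajectory $x=(y,q,p)$ solving $\dot x = \widetilde f(x,u)$, $\widetilde f := (C,f,g)$, with fixed initial datum and free terminal state. An optimal triple $(q^*,p^*,u^*)$ induces an optimal $x^*$ for this reduced problem.

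\textbf{Needle variations.} Fix a Lebesgue point $\tau\in(0,T)$ of $t\mapsto\widetilde f(x^*(t),u^*(t))$ and a value $v\in U$, and for small $\varepsilon>0$ let $u_\varepsilon$ equal $v$ on $[\tau-\varepsilon,\tau]$ and $u^*$ elsewhere, with $x_\varepsilon$ the corresponding trajectory. A uniform Gronwall estimate yields $x_\varepsilon(t)=x^*(t)+\varepsilon\,w(t)+o(\varepsilon)$ on $[\tau,T]$, where $w$ solves the variational (linearized) equation $\dot w=\partial_x\widetilde f(x^*,u^*)\,w$ with ``initial'' value $w(\tau)=\widetilde f(x^*(\tau),v)-\widetilde f(x^*(\tau),u^*(\tau))$; writing $R(\cdot)$ for the fundamental matrix of that linear equation, $w(T)=R(T)R(\tau)^{-1}w(\tau)$.

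\textbf{Separation, adjoint, and the minimum condition.} The conical convex hull $\mathcal{K}\subset\bbR^{2n+1}$ of all terminal variations $w(T)$ obtained this way is a convex cone, and optimality of $x^*$ prevents any first-order decrease of the terminal cost along $\mathcal{K}$; a supporting-hyperplane (Hahn--Banach) argument then produces a nonzero covector $(\rho_0,\lambda_T,\psi_T)\in\bbR\times\bbR^n\times\bbR^n$ with $(\rho_0,\lambda_T,\psi_T)\cdot k\le0$ for all $k\in\mathcal{K}$, the monotone dependence of $J$ on $y(T)$ forcing $\rho_0\ge0$ (and, since there is no terminal constraint, one may even normalize $\rho_0=1$). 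Define $(\lambda,\psi)$ by backward integration of the adjoint linear system, i.e. the transpose of the variational equation together with $\dot\rho\equiv0$, $\rho(T)=\rho_0$; writing it out gives exactly $\dot\lambda=-\nabla_q\calH$, $\dot\psi=-\nabla_p\calH$ with $\lambda(T)=\rho_0\nabla_q\Phi(q^*(T),p^*(T))$ and $\psi(T)=\rho_0\nabla_p\Phi(q^*(T),p^*(T))$, and boundedness of the coefficients — $q^*,p^*\in W^{1,\infty}$, $u^*\in L^\infty$, $C,f,g,\Phi$ of class $C^1$ — places $\lambda,\psi$ in $W^{1,\infty}$, while nontriviality of $(\rho_0,\lambda,\psi)$ follows from $(\rho_0,\lambda_T,\psi_T)\neq0$ and invertibility of the adjoint flow. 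Finally, the pairing $t\mapsto(\rho(t),\lambda(t),\psi(t))\cdot w(t)$ is constant along any solution $w$ of the variational equation, so transporting the separation inequality from $t=T$ back to $t=\tau$ gives $(\rho_0,\lambda(\tau),\psi(\tau))\cdot\big(\widetilde f(x^*(\tau),v)-\widetilde f(x^*(\tau),u^*(\tau))\big)\ge0$, which is precisely $\calH(q^*(\tau),p^*(\tau),v,\lambda(\tau),\psi(\tau),\rho_0)\ge\calH(q^*(\tau),p^*(\tau),u^*(\tau),\lambda(\tau),\psi(\tau),\rho_0)$; as $\tau$ ranges over a full-measure subset of $[0,T]$ and $v\in U$ is arbitrary this is \eqref{eq:min_ham}, while the displayed adjoint and transversality identities are \eqref{eq:pontq}--\eqref{eq:pontp}.

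\textbf{Main obstacle.} The genuinely delicate steps are the $o(\varepsilon)$ trajectory expansion — a careful uniform Gronwall argument at Lebesgue points, which really needs $f,g,C\in C^1$ rather than merely Lipschitz, so that if one insists on the stated hypotheses one must instead invoke a nonsmooth maximum principle with Clarke subdifferentials — and the conical separation, where the main care goes into nontriviality of the multiplier and the sign of $\rho_0$; the reduction to a Mayer problem, the constancy of the adjoint pairing, and the $W^{1,\infty}$ regularity of $\lambda,\psi$ are then routine.
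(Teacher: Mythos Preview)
The paper does not prove this theorem at all: it is stated as a standard result from the optimal control literature and is simply attributed to \cite{Gerdts12} (the theorem header itself carries the citation). There is no accompanying proof environment; the only follow-up remark in the paper is the observation that, because the problem has no terminal state constraint, the principle applies in the normal form $\rho_0=1$.

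Your sketch is therefore not comparable to the paper's proof --- there is none --- but it is a sound outline of the classical needle-variation/separation argument. One small point worth tightening: for the free terminal state problem at hand the separation step is actually simpler than you make it. After the Mayer reduction the terminal cost is $y(T)+\Phi(q(T),p(T))$, whose gradient is $(1,\nabla_q\Phi,\nabla_p\Phi)$; optimality directly gives $(1,\nabla_q\Phi,\nabla_p\Phi)\cdot k\ge0$ for every $k$ in the terminal variation cone, so the separating covector is forced (up to a positive scalar) and one obtains $\rho_0=1$ together with the transversality conditions without invoking Hahn--Banach. Your more general conical-separation phrasing is not wrong, but the link between $(\rho_0,\lambda_T,\psi_T)$ and $(\nabla_q\Phi,\nabla_p\Phi)$ deserves an explicit line. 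Your caveat about $C^1$ versus Lipschitz regularity is well taken and matches the paper's standing assumption that $C$ and $\Phi$ are continuously differentiable.
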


The vectors $\lambda(t)$ and $\psi(t)$ are the \emph{costate} or the \emph{adjoint variables} of the Hamiltonian equations of optimal control. The scalar $\rho_0$ is called the \emph{abnormal multiplier}. In the abnormal case, it holds $\rho_0=0$, and otherwise the multiplier can be normalized to $\rho_0=1$. Since no final constraint on the state is present in the optimal control problem, the above principle holds true with $\rho_0=1$ (as proved for instance in \cite{Tr05}).

\begin{remark}
If $g$ is affine w.r.t. $u$, then the topologies can be taken as $L^2$ for the controls, $H^2=W^{2,2}$ on $q$ and $H^1=W^{1,2}$ on $p$, and the PMP would still be valid for these classes. Besides, optimal control problems where the optimal control is in $L^2$ but not in $L^\infty$ are very seldom. For instance, if one is able to express $u$ in function of $(q,p,\lambda,\psi)$, as for the assumptions of Theorem \ref{th:commutation}, then $u$ is clearly in $L^\infty$.
\end{remark}

\section{Discretization}\label{sec:discrete}
Since we are interested in solving optimal control problems by some kind of direct method, a discrete approximation of Problem~\ref{prob:ocp} is required. To this end, we first introduce two different variational integrators that we employ for the approximation of the control system given in \eqref{eq:OCP2}-\eqref{eq:OCP3}. Based on these discrete schemes, we derive the discrete approximations of the optimal control problem that can be solved by standard numerical optimization methods. The controls play no role in the derivations of the variational integrators, therefore we will omit temporarily the dependence of the external force $F$ on $u$, which will lighten the notation. The discrete schemes including the approximation of the controls are given in Section~\ref{subsec:docp}.

\subsection{Discrete Mechanics and Variational Integrators}\label{sec:dm}
Discrete Mechanics is, roughly speaking, a discretization of Geometric Mechanics theory. As a result, one obtains a set of discrete equations corresponding to the Euler-Lagrange equation \eqref{eq:ELF-equation} above but, instead of a direct discretization of the ODE, the latter are derived from a discretization of the base objects of the theory, the state space $TQ$, the Lagrangian $L$, etc. In fact, one seeks for a sequence $\{(t_0,q_0),(t_1,q_1),\dots,\allowbreak(t_N,q_N)\}$ that approximates the actual trajectory $q(t)$ of the system ($q_k\approx q(t_k)$), for a constant time-step $h=t_{k+1}-t_k>0$.

A \emph{variational integrator} is an iterative rule that outputs this sequence and it is derived in an analogous manner to the continuous framework. Given a discrete Lagrangian $L_d\colon Q\times Q\to\bbR$ and discrete forces $F^\pm_d\colon Q\times Q\to T^*Q$, which are in principle thought to approximate the continuous Lagrangian action and the virtual work, respectively, over a short time
\begin{subequations}
\begin{align}
 L_d(q_k,q_{k+1}) &\approx \int_{t_k}^{t_{k+1}}L(q(t),\dq(t))\dt \,,\\
 F_d^-(q_k,q_{k+1})\cdot \delta q_k + F_d^+(q_k,q_{k+1})\cdot \delta q_{k+1} &\approx \int_{t_k}^{t_{k+1}}F(q(t),\dq(t))\cdot \delta q(t) \dt \,,
 \end{align}
 \end{subequations}
one applies a variational principle to derive the well-known forced discrete Euler-Lagrange (DEL) equation,
\begin{equation} \label{eq:DEL}
D_1L_d(q_k,q_{k+1}) + D_2L_d(q_{k-1},q_k) + F_d^-(q_k,q_{k+1}) + F_d^+(q_{k-1},q_k) = 0 \,,
\end{equation}
for $k=1,\ldots,N-1$, where $D_i$ stands for the partial derivative with respect to the $i$-th component. The equation defines an integration rule of the type $(q_{k-1},q_k) \mapsto (q_k,q_{k+1})$, however if we define the pre- and post-momenta (also denoted as \emph{discrete Legendre transforms})
\begin{subequations}\label{eq:discrete.legendre.transform}
\begin{align}
p_k^- &:= -D_1L_d(q_k,q_{k+1})- F^-_d(q_k,q_{k+1}),\quad  k=0,\ldots,N-1,\quad\text{and}\\
 p_k^+& := D_2L_d(q_{k-1},q_k) + F^+_d(q_{k-1},q_k), \quad  k=1,\ldots,N,
 \end{align}
\end{subequations}
the discrete Euler-Lagrange equation \eqref{eq:DEL} is read as the momentum matching $p_k^-=p_k^+=:p_k$ and defines an integration rule of the type $(q_k,p_k)\mapsto(q_{k+1},p_{k+1})$.

The nice part of the story is that the integrators derived in this way naturally preserve (or nearly preserve) the quantities that are preserved in the continuous framework, the symplectic form, the total energy (for conservative systems) and, in presence of symmetries, the linear and/or angular momentum (for more details, see \cite{MaWe01}). Furthermore, other aspects of the continuous theory can be ``easily'' adapted, symmetry reduction \cite{CaCeDi12,CoJiMa12,IgMaMa08}, constraints \cite{JoMu09,KoMaSu10}, control forces \cite{CaJuOb12,ObJuMa10}, etc.

\subsection{High Order Variational Integrators}\label{sec:hovi}
High order variational integrators for time dependent or independent systems (HOVI[t]) are a class of integrators that, by using a multi-stage approach, aim at a high order accuracy on the computation of the natural trajectories of a mechanical system while preserving some intrinsic properties of such systems. In particular, symplectic-partitioned Runge-Kutta methods (spRK) and, what we call here, symplectic Galerkin methods (sG) are $s$-stage variational integrators of order up to $2s$.

The derivation of these methods follows the general scheme that comes next, the specifics of each particular case are detailed in the following subsections. For a fixed time step $h$, one considers a series of points $q_k$, refereed as macro-nodes. Between each couple of macro-nodes $(q_k,q_{k+1})$, one also considers a set of micro-data, the $s$ stages: For the particular cases of sG and spRK methods, we consider micro-nodes $Q_1,\dots,Q_s$ and micro-velocities $\dQ_1,\dots,\dQ_s$, respectively. Both macro-nodes and micro-data (micro-nodes or micro-velocities) are required to satisfy a variational principle, giving rise to a set of equations, which properly combined, define the final integrator.

Here and after, we will use the following notation: Let $0\leq c_1<\ldots<c_s\leq1$ denote a set of collocation points and consider the associated Lagrange polynomials and nodal weights, that is,
\[ l^j(t) := \prod_{i\neq j}\frac{t-c_i}{c_j-c_i} \qquand b_j := \int_0^1l^j(t)\dt \,, \]
respectively. Note that the pair of $(c_i,b_i)$'s define a quadrature rule and that, for appropriate $c_i$'s, this rule may be a Gaussian-like quadrature, for instance, Gauss-Legendre, Gauss-Lobatto, Radau or Chebyshev.

Now, for the sake of simplicity and independently of the method, we will use the same notation for the nodal coefficients. We define for spRK and sG, respectively,
\begin{equation}\label{eq:coeff_ab}
 a_{ij} := \int_0^{c_i}l^j(t)\dt  \qquand  a_{ij} := \dd[l^j]t\Big|_{c_i} \,.
 \end{equation}
Moreover, for spRK, we will also use the nodal weights and coefficients $(\bar b_j,\bar a_{ij})$ given by Equation \eqref{eq:spRK.coeff} and, for sG, the source and target coefficients
\[ \alpha^j := l^j(0)  \qquand  \beta^j:=l^j(1) \,. \]

Finally, if $L$ denotes a Lagrangian from $\bbR^n\times\bbR^n$ to $\bbR$ coupled with an external force $F\colon(q,\dq)\in\bbR^n\times\bbR^n\mapsto(q,F(q,\dq))\in\bbR^n\times\bbR^n$, then we define
\[ P_i := \pp[L]{\dq}\Big|_i = \pp[L]{\dq}\Big|_{(Q_i,\dQ_i)}  \qquand  \dP_i := \pp[L]{q}\Big|_i + F_i = \pp[L]{q}\Big|_{(Q_i,\dQ_i)} + F(Q_i,\dQ_i) \,, \]
where $(Q_i,\dQ_i)$ are couples of micro-nodes and micro-velocities given by each method. Besides, $D_i$ will stand for the partial derivative with respect to the $i$-th component.

\subsubsection{Symplectic-Partitioned Runge-Kutta Methods}\label{sec:sprk}
Although the variational de\-ri\-va\-tion of spRK methods in the framework of Geometric Mechanics is an already known fact (see \cite{MaWe01} for an ``intrinsic'' derivation, as the current, or \cite{HaLuWa02} for a ``constrained'' one), both based on the original works of \cite{Su90,SaCa94}, we present it here again in order to ease the understanding of and the comparison with sG methods below.

Given a point $q_0\in\bbR^n$ and vectors $\{\dQ_i\}_{i=1,\dots,s}\subset\bbR^n$, we define the polynomial curves
\[ \caldQ(t) := \sum_{j=1}^sl^j(t/h)\dQ_j \qquand \calQ(t) := q_0+h\sum_{j=1}^s\int_0^{t/h}l^j(\tau)\diff\tau\dQ_j \,. \]
We have
\begin{equation} \label{eq:spRK.node.rel}
\dQ_i = \dQ(h\cdot c_i) \quand Q_i := \calQ(h\cdot c_i) = q_0 + h\sum_{j=1}^sa_{ij}\dQ_j \,.
\end{equation}
Note that the polynomial curve $\calQ$ is uniquely determined by $q_0$ and $\{\dQ_i\}_{i=1,\dots,s}$. In fact, it is the unique polynomial curve $\calQ$ of degree $s$ such that $\calQ(0)=q_0$ and $\caldQ(h\cdot c_i)=\dQ_i$. However, if we define the configuration point
\begin{equation} \label{eq:spRK.target.node}
q_1 := \calQ(h\cdot1) = q_0 + h\sum_{j=1}^sb_j\dQ_j
\end{equation}
and consider it fixed, then $\calQ$ is uniquely determined by $q_0$, $q_1$ and the $\dQ_i$'s but one. Namely, take any $1\leq i_0 \leq s$ such that $b_{i_0}\neq0$ and fix it, then
\[ \dQ_{i_0} = \left(\frac{q_1-q_0}h-\sum_{j\neq i_0}b_j\dQ_j\right)/b_{i_0} \,. \]

We now define the multi-vector discrete Lagrangian
\begin{equation}\label{eq:multivecL}
L_d(\dQ_1,\dots,\dQ_s) := h\sum_{i=1}^sb_iL(Q_i,\dQ_i)
\end{equation}
and the multi-vector discrete force
\[  F_d(\dQ_1,\dots,\dQ_s)\cdot(\delta Q_1,\dots,\delta Q_s) := h\sum_{i=1}^sb_iF(Q_i,\dQ_i)\cdot\delta Q_i \,. \]
Although not explicitly stated, they both depend also on $q_0$. If we write the micro-node variations $\delta Q_i$ in terms of the micro-velocity variations $\delta\dQ_i$ (by definition \eqref{eq:spRK.node.rel}), we have that the multi-vector discrete force is
\[  F_d(\dQ_1,\dots,\dQ_s)\cdot(\delta Q_1,\dots,\delta Q_s) = h^2\sum_{j=1}^s\sum_{i=1}^sb_ia_{ij}F(Q_i,\dQ_i)\cdot\delta \dQ_j \,. \]
The two-point discrete Lagrangian is then
\begin{equation}\label{eq:twopointL}
 L_d(q_0,q_1) := \ext_{\calP^s([0,h],\bbR^n,q_0,q_1)}L_d(\dQ_1,\dots,\dQ_s)
 \end{equation}
where $\calP^s([0,h],\bbR^n,q_0,q_1)$ is the space of polynomials $\calQ$ of order $s$ from $[0,h]$  to $\bbR^n$ such that $\calQ(0)=q_0$ and $\calQ(h)=q_1$ and the vectors $\dQ_i$'s determine such polynomials as discussed above. The so called ``extremal'' is realized by a polynomial $\calQ\in\calP^s([0,h],\bbR^n,q_0,q_1)$ such that
\begin{equation} \label{eq:DEL.micro.spRK}
\delta L_d(\dQ_1,\dots,\dQ_s)\cdot(\delta \dQ_1,\dots,\delta \dQ_s) + F_d(\dQ_1,\dots,\dQ_s)\cdot(\delta Q_1,\dots,\delta Q_s) = 0
\end{equation}
for any variations $(\delta \dQ_1,\dots,\delta \dQ_s)$, taking into account that $\delta q_0=\delta q_1=0$ and that $\delta \dQ_{i_0} = \sum_{j\neq i_0}\pp*[\dQ_{i_0}]{\dQ_j}\delta \dQ_j$. For convenience, the previous equation \eqref{eq:DEL.micro.spRK} is developed afterwards.

The two-point discrete forward and backward forces are then
\begin{equation} \label{eq:twopoint_dF}
F_d^\pm(q_0,q_1)\cdot\delta(q_0,q_1) := h\sum_{i=1}^sb_iF(Q_i,\dQ_i)\cdot\pp[Q_i]{q_\pm}\delta q_\pm \,,
\end{equation}
where $q_-=q_0$ and $q_+=q_1$. Using the previous relations, we may write
\[ F_d^- = h\sum_{i=1}^sb_i(1-a_{ii_0}/b_{i_0})F_i \qquand F_d^+ = h\sum_{i=1}^sb_ia_{ii_0}/b_{i_0}F_i \,. \]

By the momenta-matching rule \eqref{eq:discrete.legendre.transform}, we have that
\begin{eqnarray*}
-p_0 &=& -D_{i_0}L_d(\dQ_1,\dots,\dQ_s)/(hb_{i_0}) + D_{q_0}L_d(\dQ_1,\dots,\dQ_s) + F_d^- \,,\\
 p_1 &=& D_{i_0}L_d(\dQ_1,\dots,\dQ_s)/(hb_{i_0})  + F_d^+\,.
\end{eqnarray*}
where $D_{q_0}$ stands for the partial derivative with respect to $q_0$. Combining both equations, we obtain that
\[ D_{i_0}L_d + h^2\sum_{i=1}^sb_ia_{ii_0}F_i = hb_{i_0}p_1 \qquand  p_1 = p_0 + D_{q_0}L_d + h\sum_{i=1}^sb_iF_i \,. \]
Coming back to Equation \eqref{eq:DEL.micro.spRK}, we have that
\begin{align*}
0 &= \delta L_d(\dQ_1,\dots,\dQ_s)\cdot(\delta \dQ_1,\dots,\delta \dQ_s) + F_d(\dQ_1,\dots,\dQ_s)\cdot(\delta Q_1,\dots,\delta Q_s)\\
&= \sum_{j\neq i_0}\left( D_jL_d + h^2\sum_{i=1}^sb_ia_{ij}F_i + \pp[\dQ_{i_0}]{\dQ_j}\left( D_{i_0}L_d + h^2\sum_{i=1}^sb_ia_{ii_0}F_i \right) \right)\delta \dQ_j \,.
\end{align*}
Therefore, for $j\neq i_0$, we have that
\[ D_jL_d + h^2\sum_{i=1}^sb_ia_{ij}F_i = b_j/b_{i_0}\cdot\left( D_{i_0}L_d + h^2\sum_{i=1}^sb_ia_{ii_0}F_i \right) \,. \]

Thus, the integrator is defined by
\begin{subequations}
\begin{gather}
D_jL_d(\dQ_1,\dots,\dQ_s) + h^2\sum_{i=1}^sb_ia_{ij}F_i = hb_jp_1 \,, \\
q_1 = q_0 + h\sum_{j=1}^sb_j\dQ_j \,, \\
p_1 = p_0 + D_{q_0}L_d(\dQ_1,\dots,\dQ_s) + h\sum_{i=1}^sb_iF_i \,.
\end{gather}
\end{subequations}
Besides, using the definition of the discrete Lagrangian, we have
\begin{eqnarray*}
D_jL_d(\dQ_1,\dots,\dQ_s) + h^2\sum_{i=1}^sb_ia_{ij}F_i
&=& h^2\sum_{i=1}^sb_ia_{ij}\dP_i + hb_jP_j \,, \\
D_{q_0}L_d(\dQ_1,\dots,\dQ_s) + h\sum_{i=1}^sb_iF_i
&=& h\sum_{i=1}^sb_i\dP_i \,.
\end{eqnarray*}
Therefore, we may write
\begin{gather*}
P_j = p_0 + h\sum_{i=1}^sb_i(1-a_{ij}/b_j)\dP_i = p_0 + h\sum_{i=1}^s\bar a_{ji}\dP_i \,, \\
p_1 = p_0 + h\sum_{i=1}^sb_i\dP_i = p_0 + h\sum_{i=1}^s\bar b_i\dP_i \,,
\end{gather*}
were $\bar a_{ij}$ and $\bar b_i$ are given by
\begin{equation} \label{eq:spRK.coeff}
b_i\bar a_{ij}+\bar b_ja_{ji} = b_i\bar b_j \,,\quad b_i = \bar b_i\,.
\end{equation}

In summary, the equations that define the spRK integrator (with forces), are together with \eqref{eq:spRK.coeff}
\begin{subequations} \label{eq:spRK}
\begin{align}
q_1 =& q_0 + h\sum_{j=1}^sb_j\dQ_j\,, & p_1 =& p_0 + h\sum_{j=1}^s\bar b_j\dP_j\,,\label{eq:spRK_1}\\
Q_i =& q_0 + h\sum_{j=1}^sa_{ij}\dQ_j\,, & P_i =& p_0 + h\sum_{j=1}^s\bar a_{ij}\dP_j\,,\label{eq:spRK_2}\\
P_i =& \pp[L]{\dq}(Q_i,\dQ_i)\,, & \dP_i =& \pp[L]{q}(Q_i,\dQ_i) + F(Q_i,\dQ_i) \,.\label{eq:spRK_3}
\end{align}
\end{subequations}

\subsubsection{Symplectic Galerkin Methods}\label{sec:sg}
Galerkin methods are a class of methods to transform a problem given by a continuous operator (such as a differential operator) to a discrete problem. As such, spRK methods fall into the scope of this technique and could be also classified as ``symplectic Galerkin'' methods. However, we want to stress on the difference between what is called spRK in the literature and what we refer here as sG. The wording should not be confused by the one used in \cite{MaWe01}.

Given points $\{Q_i\}_{i=1,\dots,s}\subset\bbR^n$, we define the polynomial curves
\[ \calQ(t) := \sum_{j=1}^sl^j(t/h)Q_j \qquand \caldQ(t) := \frac1h\sum_{j=1}^s\dot l^j(t/h)Q_j \,. \]
We have
\[ Q_i = \calQ(h\cdot c_i) \quand \dQ_i := \caldQ(h\cdot c_i) = \frac1h\sum_{j=1}^sa_{ij}Q_j \,. \]
Note that the polynomial curve $\calQ$ is uniquely determined by the points $\{Q_i\}_{i=1,\dots,s}$. In fact, it is the unique polynomial curve $\calQ$ of degree $s-1$ such that $\calQ(h\cdot c_i)=Q_i$. However, if we define the configuration points
\begin{equation} \label{sG.source-target.nodes}
q_0 := \calQ(h\cdot0) = \sum_{j=1}^s\alpha^jQ_j  \qquand  q_1 := \calQ(h\cdot1) = \sum_{j=1}^s\beta^jQ_j
\end{equation}
and consider them fixed, then $\calQ$ is uniquely determined by $q_0$, $q_1$ and the $Q_i$'s but a couple. For instance, we may consider $Q_1$ and $Q_s$ as functions of the others, since the relations \eqref{sG.source-target.nodes} define a system of linear equations where the coefficient matrix has determinant $\gamma:=\alpha^1\beta^s-\alpha^s\beta^1\neq0$ (if and only if $c_1\neq c_s$). More precisely,
\[
   \left(
   \begin{array}{c}
     Q_1\\
     Q_s
   \end{array}
   \right)
   =
   \frac1\gamma
   \left(
   \begin{array}{cc}
      \beta^s & -\alpha^s\\
     -\beta^1 &  \alpha^1
   \end{array}
   \right)
   \left(
   \begin{array}{c}
     q_0-\sum_{j=2}^{s-1}\alpha^jQ_j\\
     q_1-\sum_{j=2}^{s-1}\beta^jQ_j
   \end{array}
   \right) \,.
\]

We now define the multi-point discrete Lagrangian
\begin{equation}\label{eq:multipointL}
L_d(Q_1,\dots,Q_s) := h\sum_{i=1}^sb_iL(Q_i,\dQ_i)
\end{equation}
and the multi-vector discrete force
\[  F_d(Q_1,\dots,Q_s)\cdot(\delta Q_1,\dots,\delta Q_s) := h\sum_{i=1}^sb_iF(Q_i,\dQ_i)\cdot\delta Q_i \,. \]
The two-point discrete Lagrangian is then
\begin{equation} \label{eq:twopointL_sG}
L_d(q_0,q_1) := \ext_{\calP^s([0,h],\bbR^n,q_0,q_1)}L_d(Q_1,\dots,Q_s)
\end{equation}
where $\calP^s([0,h],\bbR^n,q_0,q_1)$ is the space of polynomials $\calQ$ of order $s$ from $[0,h]$ to $\bbR^n$ such that the points $Q_i$'s determine such polynomials as discussed above. The so called ``extremal'' is realized by a polynomial $\calQ\in\calP^s([0,h],\bbR^n,q_0,q_1)$ such that
\begin{equation} \label{eq:DEL.micro.sG}
\delta L_d(Q_1,\dots,Q_s)\cdot(\delta Q_1,\dots,\delta Q_s) + F_d(Q_1,\dots,Q_s)\cdot(\delta Q_1,\dots,\delta Q_s) = 0
\end{equation}
for any variations $(\delta Q_1,\dots,\delta Q_s)$, taking into account that $\delta q_0=\delta q_1=0$ and that $\delta Q_i = \sum_{j=2}^{s-1}\pp*[Q_i]{Q_j}\delta Q_j$, $i=1,s$.
For convenience, the previous equation \eqref{eq:DEL.micro.sG} is developed afterwards.

The two-point discrete forward and backward forces are then formally defined by Equation \eqref{eq:twopoint_dF}. Using the previous relations, we may write
\[ F_d^- = h(\beta_sb_1F_1-\beta_1b_sF_s)/\gamma \qquand F_d^+ = h(\alpha_1b_sF_s-\alpha_sb_1F_1)/\gamma \,. \]

By the momenta-matching rule \eqref{eq:discrete.legendre.transform}, we have that
\begin{align*}
-p_0 &= \phantom{-} \beta^s/\gamma \cdot (D_1L_d + hb_1F_1) - \beta^1/\gamma \cdot (D_sL_d + hb_sF_s) \ \ \text{and}\\
 p_1 &= -\alpha^s/\gamma \cdot (D_1L_d + hb_1F_1) + \alpha^1/\gamma \cdot (D_sL_d + hb_sF_s) \,.
\end{align*}
By a linear transformation of both equations, we obtain
\begin{align*}
  D_1L_d(Q_1,\dots,Q_s) + hb_1F_1 &= -\alpha^1p_0 + \beta^1p_1 \ \ \text{and}\\
  D_sL_d(Q_1,\dots,Q_s) + hb_sF_s &= -\alpha^sp_0 + \beta^sp_1 \,.
\end{align*}
Coming back to Equation \eqref{eq:DEL.micro.sG}, we have that
\begin{align*}
0 &= \left(\delta L_d(Q_1,\dots,Q_s) + F_d(Q_1,\dots,Q_s)\right)\cdot(\delta Q_1,\dots,\delta Q_s) \\
&= \sum_{j=2}^{s-1}\left[(D_1L_d+hb_1F_1)\pp[Q_1]{Q_j} + (D_jL_d+hb_jF_j) + (D_sL_d+hb_sF_s)\pp[Q_s]{Q_j}\right]\delta Q_j \,.
\end{align*}
Therefore, for $j=2,\dots,s-1$, we obtain
\begin{align*}
\gamma (D_jL_d+hb_jF_j)
=\,& (\alpha^j\beta^s\!-\!\alpha^s\beta^j)(D_1L_d+hb_1F_1) + (\alpha^1\beta^j\!-\!\alpha^j\beta^1)(D_sL_d+hb_sF_s)\\
=\,& (\alpha^1\beta^s\!-\!\alpha^s\beta^1)(\beta^jp_1\!-\!\alpha^jp_0) \,.
\end{align*}

Thus, the integrator is defined by
\begin{subequations}
\begin{gather}
\label{eq:sG.DEL}
D_jL_d(Q_1,\dots,Q_s) + hb_jF_j = -\alpha^jp_0 + \beta^jp_1 \,,\ j=1,\dots,s;\\
q_0 = \sum_{j=1}^s\alpha^jQ_j  \qquand q_1 = \sum_{j=1}^s\beta^jQ_j
\end{gather}
\end{subequations}
Besides, using the definition of the discrete Lagrangian, we have
\begin{eqnarray*}
D_jL_d(Q_1,\dots,Q_s)
&=& h\sum_{i=1}^sb_i\left( \pp[L]{q}\Big|_i\pp[Q_i]{\dQ_j} + \pp[L]{\dq}\Big|_i\pp[\dQ_i]{\dQ_j} \right) \\
D_jL_d(Q_1,\dots,Q_s) + hb_jF_j
&=& hb_j\dP_j + \sum_{j=1}^sb_ia_{ij}P_i \,.
\end{eqnarray*}
Therefore, we may simply write
\[ hb_j\dP_j + \sum_{j=1}^sb_ia_{ij}P_i = -\alpha^jp_0 + \beta^jp_1 \,. \]

In summary and for a proper comparison, we write the equations that define the sG integrator (with forces) in a pRK way, that is
\begin{subequations} \label{eq:sG}
\begin{align}
  q_0 =& \sum_{j=1}^s\alpha^jQ_j\,, & q_1 =& \sum_{j=1}^s\beta^jQ_j\,,\label{eq:sG_1}\\
\dQ_i =& \frac1h\sum_{j=1}^sa_{ij}Q_j\,, & \dP_i =& \frac{\beta^ip_1-\alpha^ip_0}{h\bar b_i} + \frac1h\sum_{j=1}^s\bar a_{ij}P_j\,,\label{eq:sG_2}\\
  P_i =& \pp[L]{\dq}(Q_i,\dQ_i)\,, & \dP_i =& \pp[L]{q}(Q_i,\dQ_i) + F(Q_i,\dQ_i)\,,\label{eq:sG_3}
\end{align}
\end{subequations}
where $b_ia_{ij}+\bar b_j\bar a_{ji}=0$ and $b_i=\bar b_j$.

We remark that Equation \eqref{eq:sG.DEL} generalizes the ones obtained in \cite{CaJuOb12,Le04}, where the collocation points are chosen such that $c_1=0$ and $c_s=1$, which is a rather particular case.

\subsubsection{Similarities and differences between spRK and sG}\label{sec:rel}
As already mentioned, both methods can be considered of Galerkin type. In this sense, spRK and sG could be refereed as a symplectic Galerkin integrators of 1st and 0th kind, respectively, since spRK is derived from the 1st derivative of an extremal polynomial and sG from the polynomial itself. At this point, a very natural question could arise: Are spRK and sG actually two different integrator schemes? Even though the derivations of both methods are quite similar, they are in general different (although they could coincide for particular choices of the Lagrangian, the collocation points and the integral quadrature). A weak but still fair argument to support this is that, at each step, spRK relies on the determination of the micro-velocities $\dQ_i$, while sG does so on the micro-nodes $Q_i$. All the other ``unknowns'' are then computed from the determined micro-data.

In the simplest of the cases, that is, the case where one considers a Lagrangian of the form kinetic minus potential energy, $L(q,\dq) = \frac12\dq^TM\dq - U(q)$, with $M$ a constant mass matrix; $s=2$ micro-nodes (inner-stages); and Lobatto's quadrature, $c_1=0$, $c_2=1$; one may show that both schemes, spRK \eqref{eq:spRK} and sG \eqref{eq:sG}, reduce to the well-known \emph{leap-frog \emph{or} Verlet method}. They will differ when the previous main assumptions are violated, for instance if $M$ is not constant or the quadrature is other than Lobatto's.

\begin{example}
We consider a Lagrangian with a scalar mass matrix dependent on the configuration, that is, a Lagrangian of the form $L(q,\dq) = \frac12\lambda(q)\|\dq\|^2 - V(q)$, with $\lambda\colon Q\to\bbR$. Under this assumption and noting $\lambda_{1/2} := \frac{\lambda_0+\lambda_1}2$, $(\nabla)\lambda_i := (\nabla)\lambda(q_i)$, $(\nabla)V_i := (\nabla)V(q_i)$, $i=0,1$, the spRK scheme \eqref{eq:spRK} as well as the  the sG scheme \eqref{eq:sG} reduce to
\begin{align*}
p_{1/2} =\,& p_0 + \frac{h}2\left(\boxed{\frac{\nabla\lambda_0}{2\lambda_{\mathsf a}^2}}\|p_{1/2}\|^2-\nabla V_0\right) \,,\\
q_1 =\,& q_0 + \frac{h}2\left(\boxed{\frac1{\lambda_{\mathsf a}}+\frac1{\lambda_{\mathsf b}}}\right)p_{1/2} \,,\\
p_1 =\,& p_{1/2} + \frac{h}2\left(\boxed{\frac{\nabla\lambda_1}{2\lambda_{\mathsf b}^2}}\|p_{1/2}\|^2-\nabla V_1\right) \,,
\end{align*}
with a slight difference in the subindexes appearing in the framed factors. While in the spRK scheme, ${\mathsf a}=0$ and ${\mathsf b}=1$; in the sG scheme ${\mathsf a}={\mathsf b}=1/2$. It is important to note that, even though the difference is small, it makes both schemes certainly different. Besides notice that the first two equations define $p_{1/2}$ and $q_1$ implicitly and that the whole set reduces to the Verlet method for a constant $\lambda$. Indeed, it is shown in \cite{Ca13,O14} that for a Lagrangian with constant mass matrix and Lobatto quadrature rule, the sG and the spRK method coincide.
\end{example}

\subsubsection{Order of the schemes}
With respect to the accuracy of the schemes, for any Gaussian--like quadrature (Gauss-Legendre, Gauss-Lobatto, Radau and Chebyshev) and any method (spRK and sG), the schemes have a convergence order up to $2s$ (which is only attained by the combination Gauss-Lobatto together with spRK) but no lower than $2s-2$, being $s$ the number of internal stages, see Table \ref{tab:comparison}. We emphasize that these orders have been determined numerically experimenting with several ``toy examples'' for which exact solutions are known, \eg\ the harmonic oscillator and the 2 body problem (see Section \ref{sec:conmutation}), however they coincide with the known analytic results when available, that is spRK together with the Gauss-Legendre or Gauss-Lobatto quadratures (see \eg\ \cite{HaLuWa02}).

\begin{table}[h]
  \centering
  \begin{tabular}{cc|c|c|}
    \cline{3-4}
    & & spRK & sG \\
    \cline{1-4}
    \multicolumn{2}{|c|}{micro-data} & $\dQ_i$ & $Q_i$ \\
    \cline{1-4}
    \multicolumn{2}{|c|}{polynomial degree} & $s$ & $s-1$ \\
    \cline{1-4}
    \multicolumn{2}{|c|}{variational eq.'s} & $s+1$ & $s$ \\
    \cline{1-4}
    \multicolumn{2}{|c|}{extra equations} & $1$ & $2$ \\
    \cline{1-4}
    \multicolumn{1}{|c|}{\multirow{4}{*}{quadrature}} & Gauss-Legendre & $2s$ & $2s-2$ \\
    \cline{2-4}
    \multicolumn{1}{|c|}{\multirow{4}{*}{}} & Gauss-Lobatto & $2s-2$ & $2s-2$ \\
    \cline{2-4}
    \multicolumn{1}{|c|}{\multirow{4}{*}{}} & Radau & $2s-1$ & $2s-2$ \\
    \cline{2-4}
    \multicolumn{1}{|c|}{\multirow{4}{*}{}} & Chebyshev & $2s-2$ & $2s-2$ \\
    \cline{1-4}
    & & \multicolumn{2}{|c|}{order method} \\
    \cline{3-4}
  \end{tabular}

  \caption{Comparison of $s$-stage variational integrators.}
  \label{tab:comparison}
\end{table}

\subsection{Discrete optimal control problem}\label{subsec:docp}
For the discretization of the optimal control problem~\ref{prob:locp}, we employ the class of high order variational integrators. By choosing an appropriate approximation $J_d$ of the cost functional $J$, the general discrete Lagrangian optimal control problem as discretization of the Langrangian optimal control problem~\ref{prob:locp} reads (see also \cite{ObJuMa10})
\begin{problem}[Discrete Lagrangian optimal control problem]\label{probd:locp}
\begin{subequations}\label{eq:gDLOCP}
\begin{equation}\label{eq:gDLOCP1}
\min_{\{q_k,u_k\}_{k=0}^N} J_d(\{q_k,u_k\}_{k=0}^N)
\end{equation}
subject to
\begin{align}
q_0 &= q^0,\label{eq:gDLOCP2}\\
D_2L(q^0,\dq^0)+D_1L_d(q_0,q_1) + F_0^-& = 0,\label{eq:gDLOCP3}\\
D_2 L_d (q_{k-1},q_k) + D_1 L_d (q_k,q_{k+1}) + F_d^+(q_{k-1},q_k) + F_d^-(q_k,q_{k+1}) &= 0,\label{eq:gDLOCP4}
\end{align}
\end{subequations}
for $k=1,\dots, N-1$ and where Equation~\eqref{eq:gDLOCP4} is the forced discrete Euler-Lagrange equation defined in \eqref{eq:DEL} and Equations~\eqref{eq:gDLOCP2}-\eqref{eq:gDLOCP3} correspond to the initial condition~\eqref{eq:LOCP_ini} expressed by means of the discrete Legendre transform \eqref{eq:discrete.legendre.transform}. Here, the control trajectory $u$ is approximated by the discrete values $u_k$, $k=0,\ldots,N$, such that $u_k \approx u(t_k)$. Note that for the controlled case, the $F_d^\pm$ are dependent on $u_k$. To specify the discrete optimal control problem, in particular, the approximation of the control parameter $u$ and the cost functional $J$, we focus on the high order variational integrators derived in Section~\ref{sec:hovi}, namely the spRK and the sG method, and find a discrete version of the more general optimal control problem~\ref{prob:ocp}.
\end{problem}

As for the approximation of $q(t)$ and $\dq(t)$, we also use a polynomial for the approximation of the control function $u(t)$ on $[0,h]$. For a given set of collocation points $0\leq c_1<\ldots<c_s\leq1$ and given control points $\{ U_i \}_{i=1,\ldots,s}\subset U$ we define the polynomial of degree $s-1$
\[ \calU(t):=\sum_{j=1}^s l^j(t/h)U_j \]
such that $U_i = \calU(h\cdot c_i),\, i=1,\ldots,s$. Note that the control polynomial $\calU(t)$ has the same degree as the polynomial $\calQ(t)$ for the sG integrator, whereas for the spRK integrator it coincides with the polynomial degree of $\caldQ(t)$. To take in consideration the control dependent force in the previous derivation of the spRK and the sG schemes into account, we replace in the definitions for $F_d^\pm$ in Equation \eqref{eq:twopoint_dF} the external force $F_i=F(Q_i,\dQ_i)$ by $F_i=F(Q_i,\dQ_i,U_i)$. Furthermore, for a regular Lagrangian and by using the definitions for $f$ and $g$ in \eqref{eq:fg}, we can write Equations~\eqref{eq:spRK_3} and \eqref{eq:sG_3} as
\[f(Q_i, P_i) = \left(\pp[L]{\dq}\right)^{-1}(Q_i,P_i)\,, \quad g(Q_i, P_i,U_i) = \pp[L]{q}(Q_i,\dQ_i) + F(Q_i,\dQ_i,U_i) \]
such that the spRK scheme can be written as
\begin{subequations} \label{eq:spRKred}
\begin{align}
q_1 =& q_0 + h\sum_{j=1}^sb_j f(Q_j,P_j)\,, & p_1 =& p_0 + h\sum_{j=1}^s\bar b_jg(Q_j,P_j,U_j)\,,\label{eq:spRK_red1}\\
Q_i =& q_0 + h\sum_{j=1}^sa_{ij} f(Q_j,P_j)\,, & P_i =& p_0 + h\sum_{j=1}^s\bar a_{ij}g(Q_j,P_j,U_j)\,,\label{eq:spRK_red2}
\end{align}
\end{subequations}
with $b_i\bar a_{ij}+\bar b_ja_{ji} = b_i\bar b_j$ and $b_i=\bar b_j$
and the sG scheme reduces to
\begin{subequations} \label{eq:sG_general}
\begin{align}
  q_0 =& \sum_{j=1}^s\alpha^jQ_j\,, & q_1 =& \sum_{j=1}^s\beta^jQ_j\,,\\
f(Q_i,P_i) =& \frac{1}{h}\sum_{j=1}^sa_{ij}Q_j\,, & g(Q_i,P_i,U_i) =& \frac{\beta^ip_1-\alpha^ip_0}{h\bar b_i} + \frac1h\sum_{j=1}^s\bar a_{ij}P_j\,,
\end{align}
\end{subequations}
where $b_ia_{ij}+\bar b_j\bar a_{ji}=0$ and $b_i=\bar b_j$.
Remember that the coefficients $a_{ij}$ are different for the two schemes \eqref{eq:spRKred} and \eqref{eq:sG_general} (see \eqref{eq:coeff_ab}).
To approximate the cost functional $\int_0^h C(q(t),p(t),u(t))\,\dt$ in \eqref{eq:OCP:cost} we employ the same quadrature rule that we use to approximate the action on $[0,h]$ (\cf\ \eqref{eq:multivecL} and \eqref{eq:multipointL}) such that the discrete density cost function $C_d$ is defined by
\[
C_d(\{Q_i,P_i,U_i\}_{i=1}^s) := h \sum_{i=1}^s b_i C(Q_i,P_i,U_i) \approx \int_0^h C(q(t),p(t),u(t))\,\dt.
\]

So as to prevent a proliferation of symbols and alleviate the notation, along a time step interval $[t_k,t_{k+1}]$, we write $q_h^k$, $p_h^k$ and $u_h^k$ instead of $\{q_k,\{Q_i^k\}_{i=1}^s,q_{k+1}\}$, $\{p_k,\{P_i^k\}_{i=1}^s,p_{k+1}\}$ and $\{U_i^k\}_{i=1}^s$, respectively. We drop the superscript $k$ if we consider an arbitrary time step interval $[0,h]$. With some abuse, along the whole interval $[0,T]$, we equally write $q_h$, $p_h$ and $u_h$ instead of $\{\{q_k,Q_i^k\}_{i=1,\dots,s}^{k=0,\dots,N-1},q_N\}$, $\{\{p_k,P_i^k\}_{i=1,\dots,s}^{k=0,\dots,N-1},p_N\}$ and $\{U_i^k\}_{i=1,\dots,s}^{k=0,\dots,N-1}$, respectively.

With this notation we define the discrete cost function $J_d$ as
\[
J_d(q_h, p_h,u_h) := \sum_{k=0}^{N-1} C_d(q_h^k,p_h^k,u_h^k) + \Phi(q_N,p_N)
\]
and introduce the following two discrete optimal control problems, where the discretization in Problem~\ref{prob:docp:spRK} is based on the spRK integrator and in Problem~\ref{prob:docp:sG} on the sG integrator.

\begin{problem}[Discrete optimal control problem: the spRK case] \label{prob:docp:spRK}
\begin{subequations}\label{eq:docp:spRK}
\begin{equation}\label{eq:docp:spRK:a}
\min_{q_h, p_h,u_h} J_d(q_h, p_h, u_h)
\end{equation}
subject to
\begin{align}
\label{eq:docp:spRK:b}
q_{k+1} =& q_k + h\sum_{j=1}^sb_j f(Q^k_j,P^k_j)\,, & p_{k+1} =& p_k + h\sum_{j=1}^s\bar b_jg(Q^k_j,P^k_j,U^k_j)\,,\\
\label{eq:docp:spRK:c}
Q^k_i =& q_k + h\sum_{j=1}^sa_{ij} f(Q^k_j,P^k_j)\,, & P^k_i =& p_k + h\sum_{j=1}^s\bar a_{ij}g(Q^k_j,P^k_j,U^k_j)\,
\end{align}
$k=0,\ldots,N-1,\, i=1,\ldots,s$, with $b_i\bar a_{ij}+\bar b_ja_{ji} = b_i\bar b_j$ and $b_i=\bar b_j$,
\begin{eqnarray}
\label{eq:docp:spRK:d}
(q_0,p_0) &=& (q^0,p^0), \quad U_i^k \in U.
\end{eqnarray}
\end{subequations}
\end{problem}

\begin{problem}[Discrete optimal control problem: the sG case]\label{prob:docp:sG}
\begin{subequations}\label{eq:docp:sG}
\begin{equation}\label{eq:docp:sG:a}
\min_{q_h, p_h, u_h} J_d(q_h, p_h, u_h)
\end{equation}
subject to
\begin{align}
\label{eq:docp:sG:b}
  q_k =& \sum_{j=1}^s\alpha^jQ^k_j\,, & q_{k+1} =& \sum_{j=1}^s\beta^jQ^k_j\,,\\
\label{eq:docp:sG:c}
f(Q^k_i,P^k_i) =& \frac{1}{h}\sum_{j=1}^sa_{ij}Q^k_j\,, & g(Q^k_i,P^k_i,U^k_i) =& \frac{\beta^ip_{k+1}-\alpha^ip_k}{h\bar b_i} + \frac1h\sum_{j=1}^s\bar a_{ij}P^k_j\,,
\end{align}
$k=0,\ldots,N-1,\, i=1,\ldots,s$, with $b_ia_{ij}+\bar b_j\bar a_{ji}=0$ and $b_i=\bar b_j$,
\begin{eqnarray}
\label{eq:docp:sG:d}
(q_0,p_0) &=& (q^0,p^0), \quad U_i^k \in U.
\end{eqnarray}
\end{subequations}
\end{problem}

Since Problem~\ref{prob:docp:spRK} has been extensively studied in \cite{ObJuMa10} (as discussed in Section~\ref{subsec:comp}), in this work we focus on Problem~\ref{prob:docp:sG}.

\subsection{Comparison of different solution methods}\label{subsec:comp}

In Figure~\ref{fig:intro_comparison} (see also \cite{ObJuMa10}) we present schematically different discretization strategies for optimal control problems.
Starting with the Lagrangian optimal control problem~\ref{prob:locp}, we obtain via variation (for the derivation of the Euler-Lagrange equations) the optimal control problem~\ref{prob:ocp}. For its solution, direct or indirect methods can be employed (the differences of direct and indirect methods are already discussed in Section~\ref{sec:intro}).

In the DMOC approach, rather than discretizing the differential equations arising from the Lagrange-d'Alembert principle, we discretize in the earliest stage, namely already at the level of the variational principle. Then, we perform the variation only on the discrete level which results in a nonlinear programming problem (in particular we obtain the discrete Lagrangian optimal control problem~\ref{probd:locp}). Its necessary optimality conditions are derived by a dualization step as for a standard direct method.
This approach that uses the concept of discrete mechanics leads to a special discretization of the system equations based on variational integrators. Thus, the discrete optimal control problem inherits special properties exhibited by variational integrators as extensively discussed in \cite{ObJuMa10}.

\begin{figure}[h]
\begin{center}
\includegraphics[width=\textwidth]{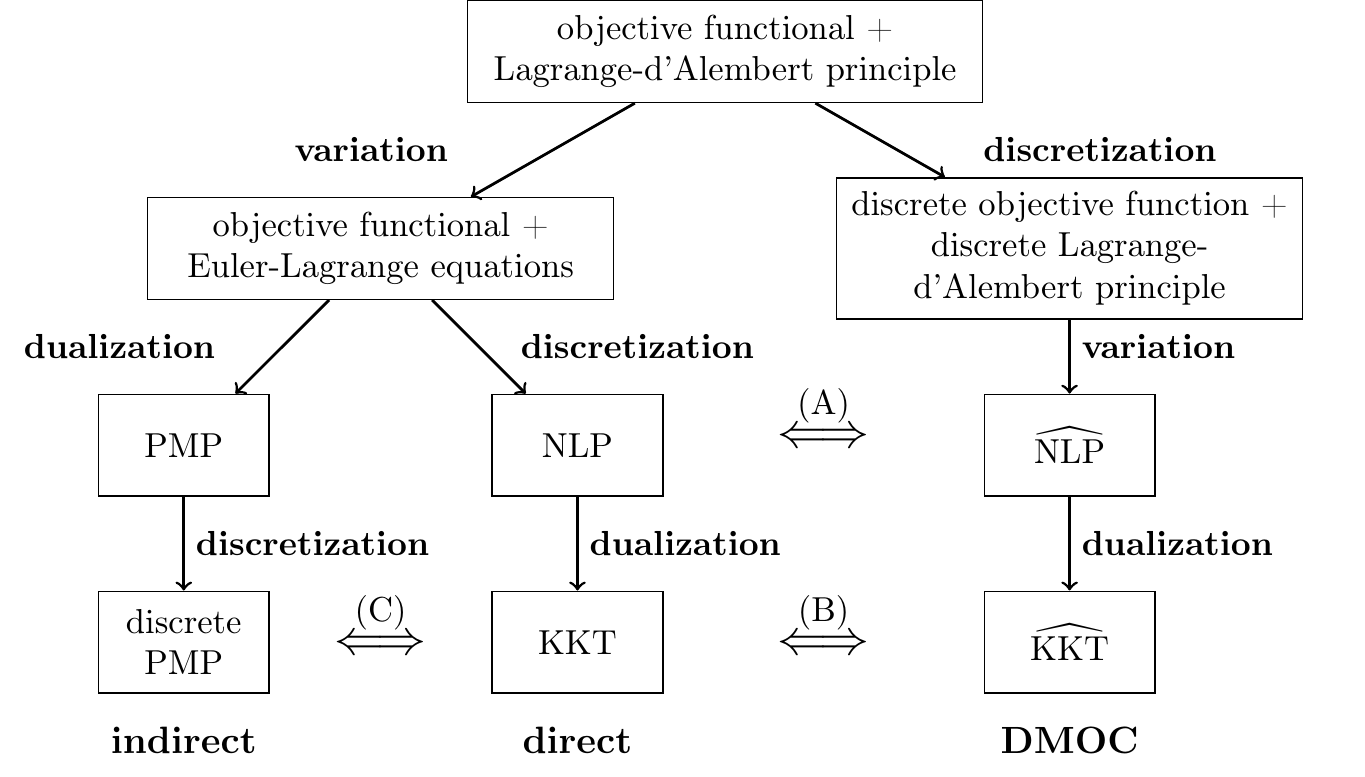}
\caption[Diagram of OCP numerical methods]{\footnotesize Optimal control for mechanical systems: the order of variation, dualization and discretization for deriving the necessary optimality conditions.}\label{fig:intro_comparison}
\end{center}
\end{figure}

In this work we are interested in the question under which conditions the discretized necessary optimality conditions (discrete PMP) and the KKT conditions resulting from the discrete optimal control problems \eqref{prob:docp:spRK} and \eqref{prob:docp:sG} ($\widehat{\text{KKT}}$) are identical.
To this end, we summarize by now already known equivalence relations (A), (B) and (C) as indicated in Figure~\ref{fig:intro_comparison}.

\paragraph{Equivalence (A)}
This equivalence corresponds to the commutation of variation and discretization.
For particular variational integrators their equivalence to other well-known integration methods has been shown, \eg\ the equivalence to the St\"ormer-Verlet method (\cite{Ma92}), to the Newmark algorithm (\cite{Kane00}) or, more generally, to spRK methods (\cite{Su90, MaWe01, HaLuWa02}) applied to the corresponding Hamiltonian system. Whereas the variational derivation of spRK methods is an already known fact and was presented in Section~\ref{sec:sprk} from a slightly different point of view, we found in Section~\ref{sec:sg} a new class of integrators, the sG methods, that applied to the Hamiltonian systems are equivalent to variational integrators based on the discrete Lagrangian~\eqref{eq:twopointL_sG}.
Obviously, if the variation of the discrete Lagrange-d'Alembert principle (the variational integrator) and the discretization of the Euler-Lagrange or, equivalently, the Hamiltonian equations is the same, then, using the same discretization for the cost functional provides the same NLP in the middle and the right branch in Figure~\ref{fig:intro_comparison}.

\paragraph{Equivalence (B)}
Of course, if NLP and $\widehat{\text{NLP}}$ are identical (equivalence (A)), then also the KKT and the $\widehat{\text{KKT}}$ conditions are identical.

\paragraph{Equivalence (C)} This equivalence corresponds to the commutation of discretization and dualization.
As already summarized in Section~\ref{sec:intro}, for the class of Legendre pseudospectral methods the commutation property has been proved if additional closure conditions are satisfied (see \cite{GoRoKaFa08,RoFa03}). In \cite{ObJuMa10} the commutation of discretization and dualization is proved for spRK methods, that means, the dualization of Problem~\ref{prob:docp:spRK} is the same as the discretization of the necessary optimality conditions using a spRK method. Due to equivalence (A), this commutation property also holds for high order variational integrators based on the discrete Lagrangian~\eqref{eq:twopointL}.
However, for general classes of variational integrators (in particular there exist high order variational integrators that are not equivalent to RK methods, see \eg\ \cite{O14}), the commutation property is still an open question.
As new contribution in this paper, we show in Section~\ref{sec:conmutation} that discretization and dualization also commute for the sG integrator (see Theorem~\ref{th:commutation}) (\ie\ the dualization of Problem~\ref{prob:docp:sG} is the same as the discretization of the necessary optimality conditions using an sG method).
We therefore find (besides the spRK methods) another class of variational integrators that fulfills the commutation property.

\section{Analysis of convergence of the primal variables} \label{sec:conv}
In this section, we present one of the main results of the paper, that is the convergence of the primal variables. Before that, a couple of comments are necessary to clarify the assumptions and notation that appear in the statement. Examples will enlighten the result.

We say that a function $f\colon H\to\bbR$ is \emph{coercive}, where $H$ is a Hilbert space with norm $\|\cdot\|$ (in our case of study $H$ is either $\bbR^m$ or $L^2([0,T],\bbR^m)$), if there exists a scalar factor $\alpha>0$ for which
\[ \liminf_{\|x\|\to\infty}\frac{f(x)}{\|x\|^2}\geq\alpha \,. \]
If $f$ depends on a further variable, $f=f(x,y)$, we say that $f$ is \emph{uniformly coercive in $x$ (with respect to $y$)} if the coercivity factor $\alpha$ does not depend on $y$.

In the next result, a discrete trajectory $q_h$, either over a time step interval $[0,h]$ or over the whole time interval $[0,T]$, should be understood as a continuous trajectory along $[0,h]$ or $[0,T]$, respectively. To that, on  $[0,h]$, $q_h=\{q_0,\{Q_i\}_{i=1}^s,q_1\}$ can be also viewed as its own linear interpolation, that is as the piecewise-linear continuous function $q_h\colon[0,h]\to\bbR^n$ determined by
\[ q_h(0)=q_0 \,,\ q_h(i\cdot h/(s+1))=Q_i\,,\ i=1,\ldots,s\,,\ q_h(h)=q_1 \]
and linear in between. One proceeds similarly on $[0,T]$ and as well for $p_h$ and $u_h$.

\begin{theorem} \label{thm:control:conv}
Given a Lagrangian function $L\colon TQ\to\bbR$, an external control force $F\colon TQ\times U\to T^*Q$, a density cost function $C\colon T^*Q\times U\to\bbR$ and a set of collocation points $0\leq c_1<\ldots<c_s\leq1$ defining a quadrature rule $(c_i,b_i)$, let us assume that
\begin{enumerate}
\renewcommand{\theenumi}{H\arabic{enumi}}
\item $L$ is regular; \label{thm:control:conv:regularity}
\item $F$ is affine on the controls, \ie\ $F(q,\dq,u)=F_0(q,\dq)+u\cdot F_1(q,\dq)$; \label{thm:control:conv:linearity}
\item $C$ is uniformly coercive in $u$ and smooth in $(q,p)$; \label{thm:control:conv:coercivity}
\item $(OCP)$, the continuous Problem \ref{prob:ocp}, has a unique solution $(\bar q,\bar p, \bar u)$;\label{thm:control:conv:uniqueness}
\item $b_i>0$ for $i=1,\ldots,s$; and \label{thm:control:conv:positiveness}
\item the associated spRK or sG scheme is convergent (for $L$, $F$ and any fixed $u$).\label{thm:control:conv:convergence}
\end{enumerate}
Then $(\bar q_h,\bar p_h, \bar u_h)$ converges (up to subsequence) to $(\bar q,\bar p, \bar u)$ as $h\to0$ ($N\to\infty$), strongly in $(q,p)$ and weakly in $u$, where $(\bar q_h,\bar p_h, \bar u_h)$ is the solution to $(OCP)_h$, the corresponding discrete Problem \ref{prob:docp:spRK} or \ref{prob:docp:sG}.
\end{theorem}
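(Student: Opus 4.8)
The plan is to run the classical direct method of the calculus of variations (a $\Gamma$-convergence type argument): extract a weak limit of the discrete minimizers, show it is feasible for $(OCP)$ by passing to the limit in the integrator, show it is optimal by comparing it with a recovery sequence built from the continuous optimum, and finally invoke uniqueness (H4) to identify the limit. First I would settle existence of discrete minimizers and uniform a priori bounds. For $h$ small the stage equations \eqref{eq:docp:spRK:c} (resp.\ \eqref{eq:docp:sG:c}) form a contraction in the internal variables $\{Q_i^k,P_i^k\}$ for every admissible control -- here regularity of $L$, (H1), makes $f,g$ well defined and their Lipschitz continuity gives the contraction -- so the discrete state depends continuously on the discrete control and $J_d$ is a continuous function on the closed set $U^{Ns}$. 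A recovery competitor (constructed below) bounds $J_d(\bar q_h,\bar p_h,\bar u_h)$ from above uniformly in $h$; together with $\Phi$ bounded below and the uniform coercivity of $C$ in $u$, (H3), this yields both a minimizer and a bound $\|\bar u_h\|_{L^2}\le\mathrm{const}$ independent of $h$ -- a short bootstrap is needed to close this estimate, since the coercivity lower bound on $C$ must be played against the at-most-linear-in-$u$ growth of the discrete states coming from the affineness (H2) of $g$. Feeding the control bound into the stability estimate for the convergent scheme (H6) then gives uniform bounds on $\bar q_h,\bar p_h$ in $W^{1,\infty}$.

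By Banach--Alaoglu for the controls and Arzel\`a--Ascoli (or a compact Sobolev embedding) for the states, along a subsequence $\bar u_h\rightharpoonup u^*$ weakly in $L^2([0,T],\bbR^m)$ and $\bar q_h\to q^*$, $\bar p_h\to p^*$ uniformly, with $q^*\in W^{2,\infty}$, $p^*\in W^{1,\infty}$, $u^*\in L^\infty$ (the closed convex constraint $U$ and the $L^\infty$ bound pass to the limit). I then rewrite the discrete scheme in integrated form over $[0,t]$; the micro-data $Q_i^k,P_i^k$ differ from the reconstructed $\bar q_h,\bar p_h$ by $O(h)$ and hence converge to $q^*,p^*$ as well. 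The only place the control enters is $\sum h\,b_i\,g(Q_i^k,P_i^k,U_i^k)$, which by (H2) splits into a term with $g_0(Q_i^k,P_i^k)\to g_0(q^*,p^*)$ uniformly and a term linear in the controls paired against $g_1(Q_i^k,P_i^k)\to g_1(q^*,p^*)$ strongly in $L^2$; the weak convergence $\bar u_h\rightharpoonup u^*$ then lets me pass to the limit by the weak-strong pairing. Consistency of the quadrature and of the collocation scheme (part of (H6)) identifies the limit equations as \eqref{eq:OCP2}--\eqref{eq:OCP3}, and the initial conditions pass trivially, so $(q^*,p^*,u^*)$ is feasible for $(OCP)$.

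For optimality I build a recovery sequence: given the continuous optimum $(\bar q,\bar p,\bar u)$, let $\tilde u_h$ be the nodal interpolant of $\bar u$ (which converges strongly to $\bar u$ in $L^2$), solve the discrete scheme with these controls to obtain $\tilde q_h,\tilde p_h$, which converge to $\bar q,\bar p$ by (H6); consistency of the cost quadrature -- where (H5), $b_i>0$, guarantees it remains a genuine quadrature -- gives $J_d(\tilde q_h,\tilde p_h,\tilde u_h)\to J(\bar q,\bar p,\bar u)$. Optimality of the discrete minimizer gives $J_d(\bar q_h,\bar p_h,\bar u_h)\le J_d(\tilde q_h,\tilde p_h,\tilde u_h)$, hence $\limsup_h J_d(\bar q_h,\bar p_h,\bar u_h)\le J(\bar q,\bar p,\bar u)$; weak lower semicontinuity of the running cost plus continuity of $\Phi$ along the strongly convergent states gives $\liminf_h J_d(\bar q_h,\bar p_h,\bar u_h)\ge J(q^*,p^*,u^*)$. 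Combining with feasibility of $(q^*,p^*,u^*)$ and optimality of $(\bar q,\bar p,\bar u)$,
\[
 J(\bar q,\bar p,\bar u)\ \le\ J(q^*,p^*,u^*)\ \le\ \liminf_h J_d\ \le\ \limsup_h J_d\ \le\ J(\bar q,\bar p,\bar u),
\]
so equality holds throughout, $(q^*,p^*,u^*)$ minimizes $J$ over the feasible set, and by (H4) $(q^*,p^*,u^*)=(\bar q,\bar p,\bar u)$. A standard subsequence argument then upgrades this to convergence of the whole family -- strong in $(q,p)$, weak in $u$ (no strong convergence of controls is asserted, which is why the statement keeps ``up to subsequence'').

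\emph{The main obstacle.} The genuinely delicate step is the lower bound $\liminf_h J_d(\bar q_h,\bar p_h,\bar u_h)\ge J(q^*,p^*,u^*)$: the states converge strongly but the controls only weakly and $C$ is nonlinear in $u$, so one needs sequential weak lower semicontinuity of $u\mapsto\int_0^T C(q^*(t),p^*(t),u(t))\,\dt$, i.e.\ one must lean on convexity of $C$ in the control in addition to its coercivity, and one must check that the consistent discrete quadrature inherits this lower semicontinuity in the limit. A companion technical point is establishing a stability estimate for the variational integrator (spRK or sG) that is uniform over controls bounded only in $L^2$ rather than in $L^\infty$ -- which is precisely the role played by the affine structure (H2).
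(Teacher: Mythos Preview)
Your argument is correct and follows essentially the same direct-method/$\Gamma$-convergence route as the paper: coercivity $\Rightarrow$ uniform $L^2$-bound on the discrete controls, weak extraction, passage to the limit in the dynamics exploiting the affineness (H2), comparison with a recovery competitor built from $(\bar q,\bar p,\bar u)$, and identification via uniqueness (H4). The only differences are organisational: the paper defines $(\tilde q,\tilde p)$ as the \emph{continuous} trajectory driven by the weak control limit and then argues $(\bar q_h,\bar p_h)\to(\tilde q,\tilde p)$, whereas you extract limits of states and controls simultaneously and verify feasibility; and the paper's recovery step simply evaluates $(\bar q,\bar p,\bar u)$ at the collocation nodes and absorbs the truncation error into an $O(h^r)$ term, whereas you more carefully solve the discrete scheme with the interpolated controls. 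Your flagging of the ``hidden'' convexity of $C$ in $u$ needed for weak lower semicontinuity is well placed---the paper's proof invokes this semicontinuity by citing (H3) alone, so you have in fact isolated an implicit assumption that the paper glosses over.
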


\begin{proof}
The assumption of coercivity \ref{thm:control:conv:coercivity} on the density cost function $C$ (recall $\Phi$ is assumed to be bounded from below) implies the coercivity of the total cost functional $J$ and, together with the assumption of positiveness \ref{thm:control:conv:positiveness} on the weight coefficients $b_i$, ensures also the coercivity of the discrete density cost $C_d$ and of the discrete total cost $J_d$: Indeed, from the definition and for $u_h$ big enough,
\[\frac{C_d(q_h,p_h,u_h;h)}{\|u_h\|^2} \geq h\min_ib_i\alpha \qquand \frac{J_d(q_h,p_h,u_h;h)}{\|u_h\|^2} \geq T\min_ib_i\alpha \,, \]
where $(q_h,p_h,u_h)$ represents a one step trajectory on the left while a full trajectory  on the right. The coercivity of $J$ follows similarly by integration. We stress the fact that $J_d$ is uniformly coercive with respect to the time step $h$.

Besides we have from \ref{thm:control:conv:regularity} that, for $h$ small enough, the discrete Lagrangian $L_d$ is regular. Therefore for each control set $u_h$, a unique solution $(q_h,p_h)$ to the discrete Euler-Lagrange equations exists. The minimization in $(OCP)_h$ is then nothing but a finite dimensional minimization problem whose solution existence is provided by the coercivity of $C_d$. We denote $(\bar q_h,\bar p_h, \bar u_h)$ such solution.

Since the discrete cost $J_d$ is uniformly coercive with respect to $h$, the sequence $(\bar u_h)$, thought in $L^2([0,T],U)$, is bounded. Hence, up to subsequence, it converges to some control $\tilde u\in L^2([0,T],U)$, for the weak topology of $L^2$. It only remains to show that $(\tilde q,\tilde p, \tilde u)$, where $(\tilde q,\tilde p)$ is the unique solution of the mechanical system corresponding to the control $\tilde u$ (hypothesis \ref{thm:control:conv:regularity}), is in fact the same point as $(\bar q,\bar p, \bar u)$.

Firstly, by \ref{thm:control:conv:linearity}, the controls enter the continuous and discrete dynamical equations \eqref{eq:LOCP_dA}, \eqref{eq:spRK_1}, \eqref{eq:spRK_2}, \eqref{eq:sG_1} and \eqref{eq:sG_2} linearly. It follows that $(\bar q_h,\bar p_h)$ converges uniformly to $(\tilde q,\tilde p)$ (more technical details on this standard reasoning may be found in \cite{Tr00}). Secondly, we observe that the control $\tilde u$ is optimal: Indeed, since the discrete cost $J_d$ is an approximation to the continuous one $J$, we have that for some exponent $r\geq1$
\begin{eqnarray*}
J(\bar q_h,\bar p_h,\bar u_h)
&  = & J_d(\bar q_h,\bar p_h,\bar u_h;h) + O(h^r)\\
&\leq& J_d((\bar q,\bar p,\bar u)_h;h) + O(h^r)\\
&  = & J(\bar q,\bar p,\bar u) + O(h^r) \,,
\end{eqnarray*}
where $(\bar q,\bar p,\bar u)_h$, for each $h$, represents simply the evaluation of $(\bar q,\bar p,\bar u)$ at the collocation points of each time interval. Passing to the limit, from the lower semi-continuity of $J$ with respect to $u$ (given by the integral expression and hypothesis \ref{thm:control:conv:coercivity}), it follows that
\[ J(\tilde q,\tilde p,\tilde u) \leq J(\bar q,\bar p,\bar u) \,. \]
Hence $(\tilde q,\tilde p,\tilde u)$ is optimal and by uniqueness coincides with $(\bar q,\bar p,\bar u)$.
\end{proof}

\begin{remark}\label{rem:coercivity}
Even though the rather ``classical'' definition of coercivity used here is perhaps less restrictive than the one given in \cite{Ha00,Ha01}, there is no direct relation between them in the sense that none of them implies or is implied by the other. They have non-empty intersection. In there, the coercivity is a classical sufficient second-order assumption ensuring the absence of conjugate points. In here, the coercivity implies the existence of a (global) optimal control. The framework that \ref{thm:control:conv} proposes has some advantages: It permits to prove without difficulty the existence of solutions for the discretized problem and it ensures the convergence from a simple topological argument. Moreover, the proof itself has the potential of being more general, for instance to consider final constraints (by means of finer arguments, the concept of end-point mapping, the general conjugate point theory).
\end{remark}

\begin{remark}\label{rem:nonlinearity}
It is worth to note that the previous proof withstands some easy generalizations. If we now take more general dynamics (nonlinear force in $u$) and costs, then the above reasoning works as well provided the cost is coercive in some $L^p$ and the dynamics satisfy, for instance,
\[ \limsup_{\|u\|\to\infty}\frac{\|\Psi(q,p,u)\|}{\|u\|^r} = 0 \,,\ p>r\,, \]
where $\Psi=0$ stands for the dynamical constraints \eqref{eq:LOCP_dA}.
\end{remark}

\begin{remark}\label{rem:convergence}
A convergence proof (including consistency and stability) for general variational integrators (as assumed in \ref{thm:control:conv:convergence}) is topic of ongoing research. For particular classes, the convergence is proven by showing that the variational integrator is equivalent to another well-known convergent method, as for example for symplectic partitioned Runge-Kutta methods. For a recent convergence analysis for Galerkin variational integrators by means of variational error analysis we refer to \cite{HaLe13}. The assumption \ref{thm:control:conv:uniqueness}, the uniqueness of the solution of $(OCP)$, is a classical one. It can be weakened by stating the result in terms of closure points as follows. We assume that $C^0([0,T],T^*Q)$ is endowed with the uniform convergence topology and that $L^2([0,T],U)$ is endowed with the weak topology. Then every closure point of the family of solutions $(\bar q_h,\bar p_h, \bar u_h)$ of $(OCP)_h$ in $C^0([0,T],T^*Q)\times L^2([0,T],U)$ is a solution of $(OCP)$.
\end{remark}

\begin{remark}\label{rem:discretization}
In the formulation of the previous discrete optimal control problems \ref{prob:docp:spRK} and \ref{prob:docp:sG}, we have chosen to discretize the control parameter and the cost functional in accordance to the dynamical discretization. Other possibilities are available which must be pondered. Let's assume temporarily that, besides the original set of collocation points $0\leq c_1<\ldots<c_s\leq1$, we have a couple of extra sets of them: $0\leq d_1<\ldots<d_r\leq1$ and $0\leq e_1<\ldots<e_t\leq1$, for which $\calU$ is determined by the former and $J$ is discretized by the quadrature rule associated to the latter. That is, $\calU\colon[0,h]\to\bbR$ is a polynomial of order $r-1$ determined by $r$ points $\bar U_i=\calU(d_ih)$, $i=1,\ldots,r$, and for which $U_j:=\calU(c_jh)$, $j=1,\ldots,s$, and $\hat U_k:=\calU(e_kh)$, $k=1,\ldots,t$, are mere evaluations. And the cost function $J$ is discretized by the formula $h \sum_{k=0}^{N-1}  \sum_{i=1}^t \hat b_i C(\hat Q^k_i,\hat P^k_i,\hat U^k_i)$, where with a similar notation the ``hat'' stands for weights and evaluations related to the $e$'s. Now, different cases arise:
\begin{itemize}
\item If $r>t$, one does an underevaluation of the controls within the discrete cost functional which will allow these to diverge (for instance a control could not appear explicitly in the discrete cost and therefore take any arbitrary value).
\item If on the contrary $r<t$, one does an overevaluation of the controls which, in the case of a coercive discrete cost function, will only increase the computational cost with, a priori, no convergence benefits.
\item Therefore, the case $r=t$ seems to be the best fit, which is the case where there is a minimal number of evaluations of the controls (each control is evaluated just once in the discrete cost) so to have convergence under the assumption of coercivity.
\end{itemize}
Assuming the last situation and continuing with the discussion, further cases arise:
\begin{itemize}
\item On the one hand, if $r>s$, the convergence rate of the controls will be limited by the accuracy of the discrete dynamics.
\item On the other hand, if $r<s$, the convergence rate of the controls will suffer from a computational payload by the high accuracy of the dynamics.
\item Therefore, the case $r=s$ seems again to be the best fit, which is the case that equates accuracy of the dynamics with convergence rate of the controls.
\end{itemize}
Finally, under the assumption $r=s=t$, choosing a unique set of collocation points $0\leq c_1<\ldots<c_s\leq1$ (we drop the ``hats'', ``bars'', $e$'s and $d$'s), one minimizes the number of polynomial evaluations and therefore the total computational cost (besides of simplifying the problem).
\end{remark}

In the following example, we solve a simple optimal control problem with a linear dynamical constraint and a quadratic cost function. The numerical experiments show, in the spirit of \cite{Ha00} and the previous discussion before it, how a good choice and proper combination of the discretization gives a ``fast'' convergence of the scheme, while other combinations show ``slow'' convergence or even divergence of the controls, all of it exemplifying Theorem \ref{thm:control:conv}.

\begin{example}
Consider the problem
\begin{subequations} \label{eq:ocp:hager}
\begin{align}
\label{eq:ocp:hager:cost}
&  \min_{q,\dq,u}\int_0^T(\dq^2+u^2) \,\dt\\
\label{eq:ocp:hager:dyn}
&  \textrm{s.t.}\quad\ddot q = 1+u\,,\quad(q(0),\dq(0))=(0,0)
\end{align}
\end{subequations}
for which the functions
\[ q(t) = \frac{\cosh(t)-1}{\cosh(T)} \qquand u(t) = \frac{\cosh(t)}{\cosh(T)}-1 \]
are the unique solution. We identify from the forced Euler-Lagrange equation \eqref{eq:ocp:hager:dyn} the Lagrangian function $L(q,\dq) = \frac12\dq^2+q$ and the control force $F(q,\dq,u) = u$. The density cost function is obviously $C(q,\dq,u) = \dq^2+u^2$.

We discretize the mechanical system by using a symplectic Galerkin approach together with a Lobatto quadrature for $s=3$ points. We initially assume that the controls are also discretized by $r=3$ nodes. Then the right- hand side equations of \eqref{eq:sG_2} or \eqref{eq:docp:sG:c} are
\begin{subequations}
\label{eq:docp:hager}
\begin{eqnarray}
\dP_1 = & \frac{-p_0}{h/6} + \frac1h(4\dQ_1+2\dQ_2) & = 1+U_1 \,,\\
\dP_2 = & \phantom{\frac{-p_0}{h/6} +} \frac1h(-\dQ_1+\dQ_3) & = 1+U_2 \,,\\
\dP_3 = & \frac{p_1}{h/6} + \frac1h(2\dQ_2-4\dQ_3) & = 1+U_3 \,,
\end{eqnarray}
\end{subequations}
where the micro-veloticies $\dQ_i$ are given by the left equations of \eqref{eq:sG_2} or \eqref{eq:docp:sG:c}, which are in this particular case
\[
\left(\begin{array}{c}
\dQ_1\\ \dQ_2\\ \dQ_3
\end{array}\right)
= \frac1h
\left(\begin{array}{rrr}
-3 &  4 & -1\\
-1 &  0 &  1\\
 1 & -4 &  3
\end{array}\right)
\left(\begin{array}{c}
Q_1\\ Q_2\\ Q_3
\end{array}\right)
\]

For the cost function, we consider four different discretizations with Lobatto's quadrature rule for $t=1,2,3,4$ quadrature points. These are respectively
\begin{subequations} \label{eq:docp:hager:cntrl3}
\begin{eqnarray}
\label{eq:docp:hager:cntrl3:cost1}
C_d(q_h,p_h,u_h) &=& h\left(\dQ_2^2+U_2^2\right) \,,\\
\label{eq:docp:hager:cntrl3:cost2}
C_d(q_h,p_h,u_h) &=& \frac{h}2\left(\dQ_1^2+\dQ_3^2+U_1^2+U_3^2\right) \,,\\
\label{eq:docp:hager:cntrl3:cost3}
C_d(q_h,p_h,u_h) &=& \frac{h}6\left(\dQ_1^2+4\dQ_2^2+\dQ_3^2+U_1^2+4U_2^2+U_3^2\right) \,,\\
\label{eq:docp:hager:cntrl3:cost4}
C_d(q_h,p_h,u_h) &=& \frac{h}{12}\left((3-\sqrt5)\left(\dQ_1^2+(3-\sqrt5)\dQ_2^2+\dQ_3^2+\right)\right.\\
            & & \phantom{\frac{h}{12}\left(\right.}\left.+(1-1/\sqrt5)\left((\dQ_1+\dQ_2)^2+(\dQ_2+\dQ_3)^2\right)\right) \nonumber\\
            & & + \frac{h}{30}\left(2(U_1+U_2)^2+(U_1-U_3)^2+2(U_2+U_3)^2\right. \nonumber\\
            & & \phantom{+\frac{h}{30}\left(\right.}\left.+ U_1^2+12U_2^2+U_3^2\right) \,. \nonumber
\end{eqnarray}
\end{subequations}
The first two discretizations, Equations \eqref{eq:docp:hager:cntrl3:cost1} and \eqref{eq:docp:hager:cntrl3:cost2}, are clearly not coercive with respect to the controls ($U_1$ and $U_3$ are missing in \eqref{eq:docp:hager:cntrl3:cost1} and $U_2$ is missing in \eqref{eq:docp:hager:cntrl3:cost2}), which will be allowed to diverge (see Figures~\ref{fig:HagerD3U3J1} and \ref{fig:HagerD3U3J2}). Nonetheless the last two discretizations, Equations \eqref{eq:docp:hager:cntrl3:cost3} and \eqref{eq:docp:hager:cntrl3:cost4}, are indeed coercive, still one outperforms the other in terms of convergence (see Figures~\ref{fig:HagerD3U3J3} and \ref{fig:HagerD3U3J4}). The discrete cost function \eqref{eq:docp:hager:cntrl3:cost4}, besides of having a higher computational cost, shows a slower convergence rate. The discrete cost function \eqref{eq:docp:hager:cntrl3:cost3} corresponds to the method presented in Problem \eqref{eq:docp:sG} and Theorem \ref{thm:control:conv}.

\newcommand{\tU}{\widetilde U}
We continue by assuming that the approximated control $\calU$ is determined only by two points, that is
\[ \calU(t) = \tU_1 + t(\tU_3-\tU_1)  \]
(we note $\tU_3$ instead of $\tU_2$ to make the notation more appealing). The previous set of Equations \eqref{eq:docp:hager} and \eqref{eq:docp:hager:cntrl3} are then updated by merely substituting the controls by
\[ U_1 = \tU_1 \,,\ U_2 = \tfrac12(\tU_1+\tU_3) \,,\ \textrm{and}\ U_3 = \tU_3 \,,  \]
which leads to
\begin{subequations} \label{eq:docp:hager:cntrl2}
\begin{eqnarray}
\label{eq:docp:hager:cntrl2:cost1}
C_d(q_h,p_h,u_h) &=& h\left(\dQ_2^2+(\tU_1+\tU_3)^2/4\right) \,, \\
\label{eq:docp:hager:cntrl2:cost2}
C_d(q_h,p_h,u_h) &=& \frac{h}2\left(\dQ_1^2+\dQ_3^2+\tU_1^2+\tU_3^2\right) \,, \\
\label{eq:docp:hager:cntrl2:cost3}
C_d(q_h,p_h,u_h) &=& \frac{h}6\left(\dQ_1^2+4\dQ_2^2+\dQ_3^2+\tU_1^2+(\tU_1+\tU_3)^2+\tU_3^2\right) \,, \\
\label{eq:docp:hager:cntrl2:cost4}
C_d(q_h,p_h,u_h) &=& \frac{h}{12}\left((3-\sqrt5)\left(\dQ_1^2+(3-\sqrt5)\dQ_2^2+\dQ_3^2+\right)\right.\\
            & & \phantom{\frac{h}{12}\left(\right.}\left.+(1-1/\sqrt5)\left((\dQ_1+\dQ_2)^2+(\dQ_2+\dQ_3)^2\right)\right) \nonumber\\
            & & + \frac{h}{6}\left(\tU_1^2+(\tU_1+\tU_3)^2+\tU_3^2\right) \,. \nonumber
\end{eqnarray}
\end{subequations}
In this occasion, only the first discretization, Equation \eqref{eq:docp:hager:cntrl2:cost1}, defines a non-coercive discrete cost function (see Figure~\ref{fig:HagerD3U2J1}). From the rest (see Figures~\ref{fig:HagerD3U2J2}-\ref{fig:HagerD3U2J4}), Equations \eqref{eq:docp:hager:cntrl2:cost3} and \eqref{eq:docp:hager:cntrl2:cost4} show the fastest convergence rate, although slow on the controls and with a computational payload for \eqref{eq:docp:hager:cntrl2:cost4}. Equation \eqref{eq:docp:hager:cntrl2:cost3} corresponds to a discretization of the cost with three quadrature points.

\begin{figure}
   \centering
   \begin{subfigure}[b]{0.4\textwidth}
      \includegraphics[width=\textwidth]{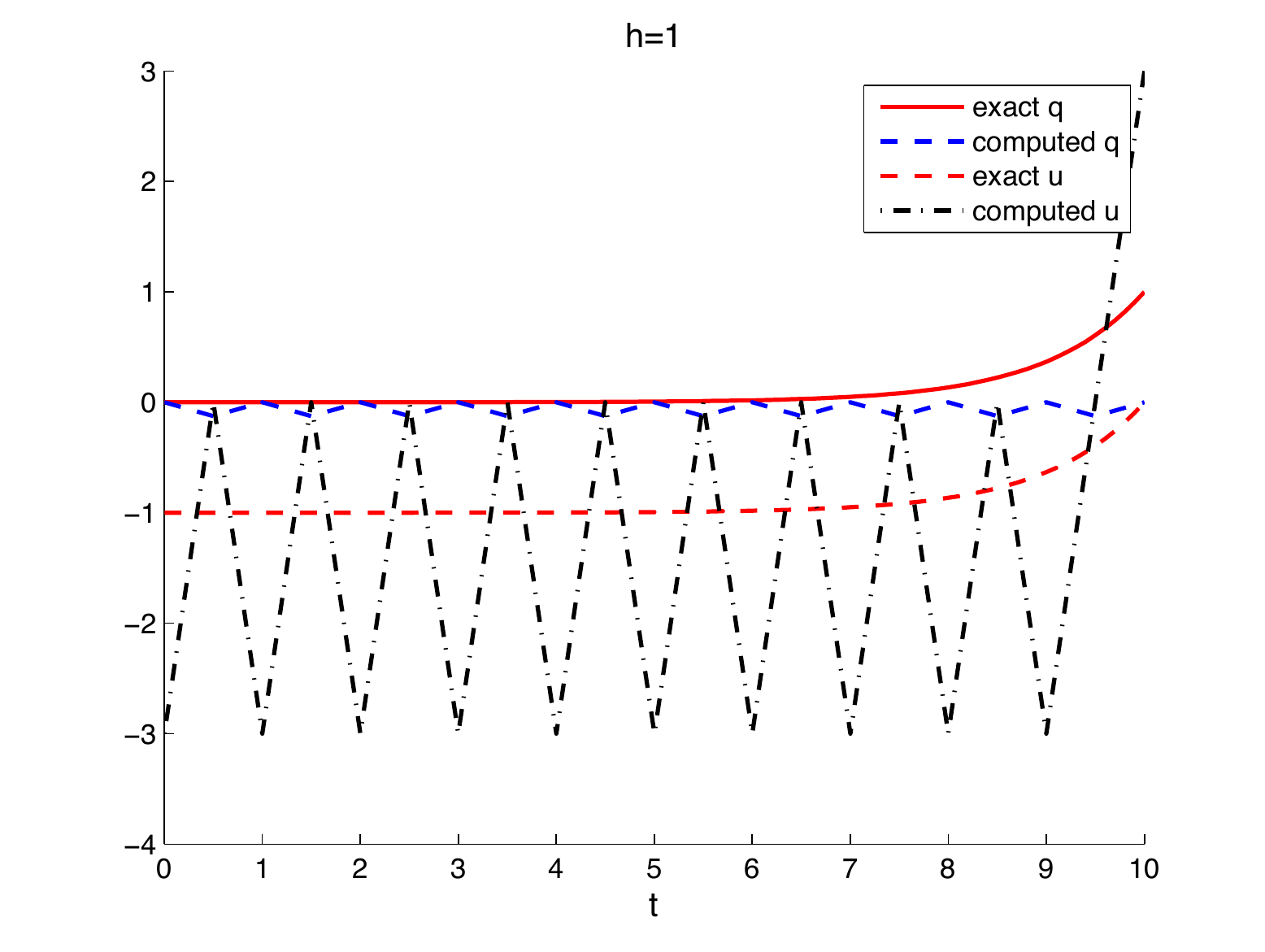}
      \caption{cost function \eqref{eq:docp:hager:cntrl3:cost1}}
      \label{fig:HagerD3U3J1}
   \end{subfigure}
   \hfill
   \begin{subfigure}[b]{0.4\textwidth}
      \includegraphics[width=\textwidth]{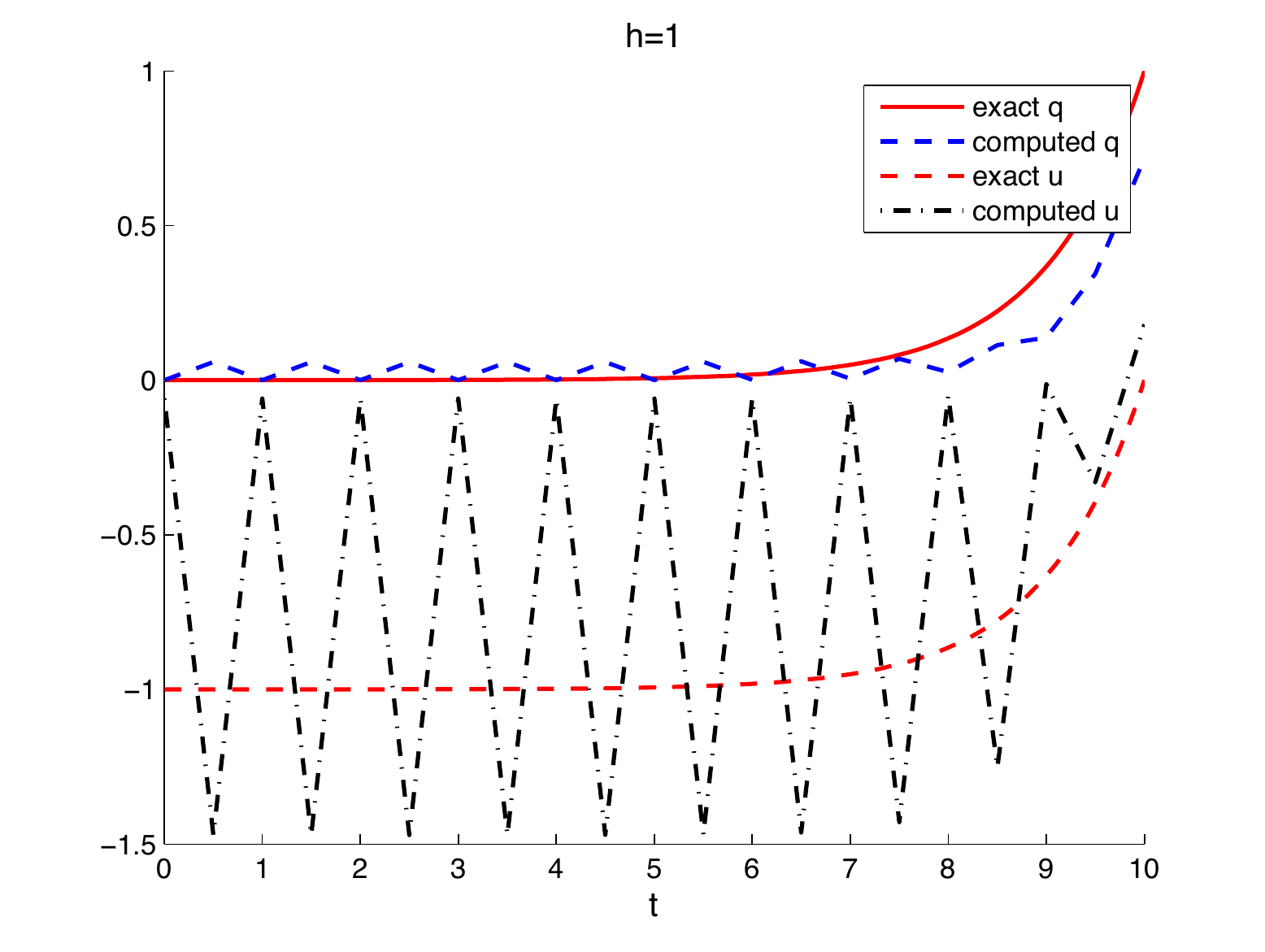}
      \caption{cost function \eqref{eq:docp:hager:cntrl3:cost2}}
      \label{fig:HagerD3U3J2}
   \end{subfigure}
   \\
   \begin{subfigure}[b]{0.4\textwidth}
      \includegraphics[width=\textwidth]{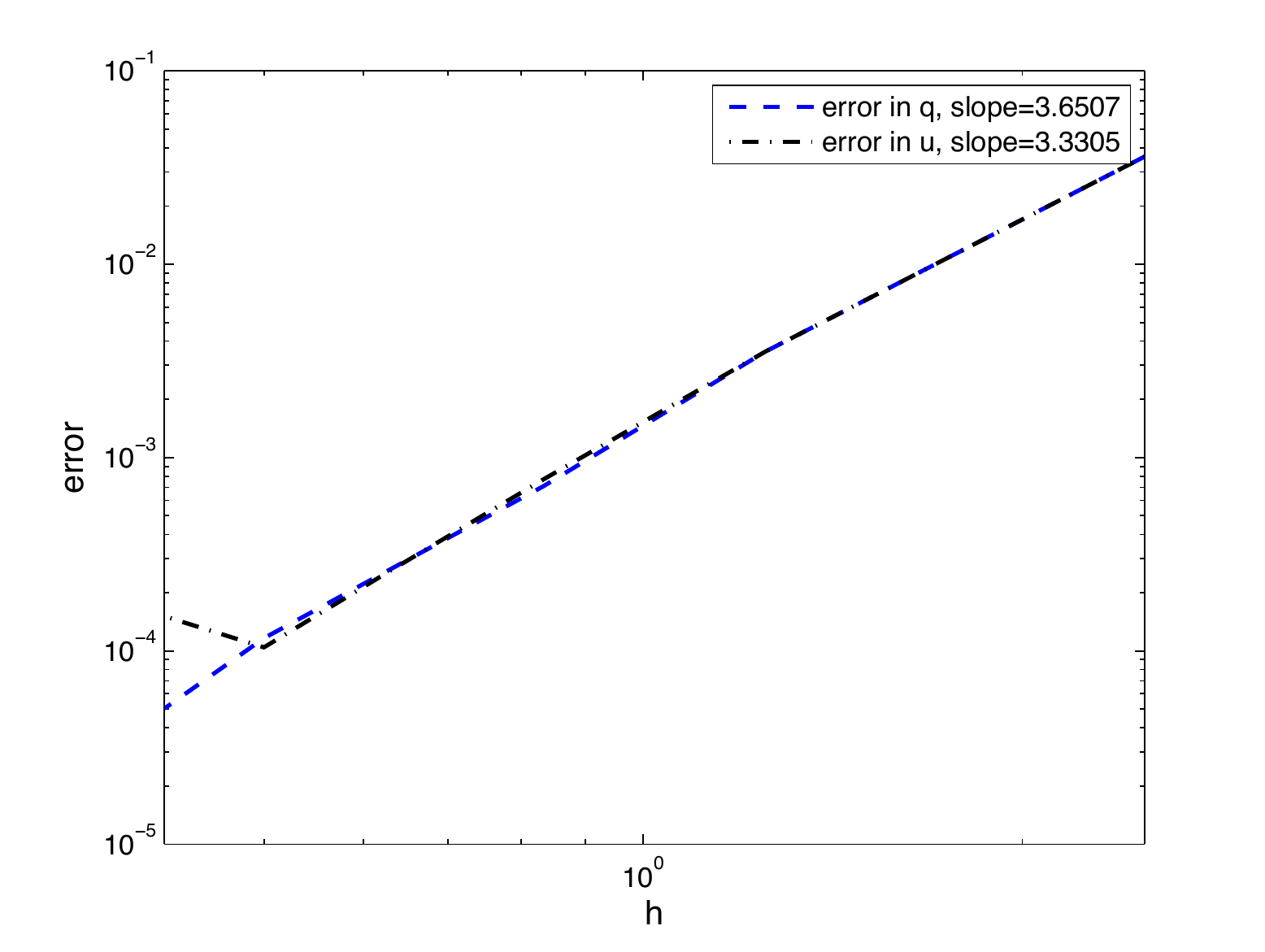}
      \caption{cost function \eqref{eq:docp:hager:cntrl3:cost3}}
      \label{fig:HagerD3U3J3}
   \end{subfigure}
   \hfill
   \begin{subfigure}[b]{0.4\textwidth}
      \includegraphics[width=\textwidth]{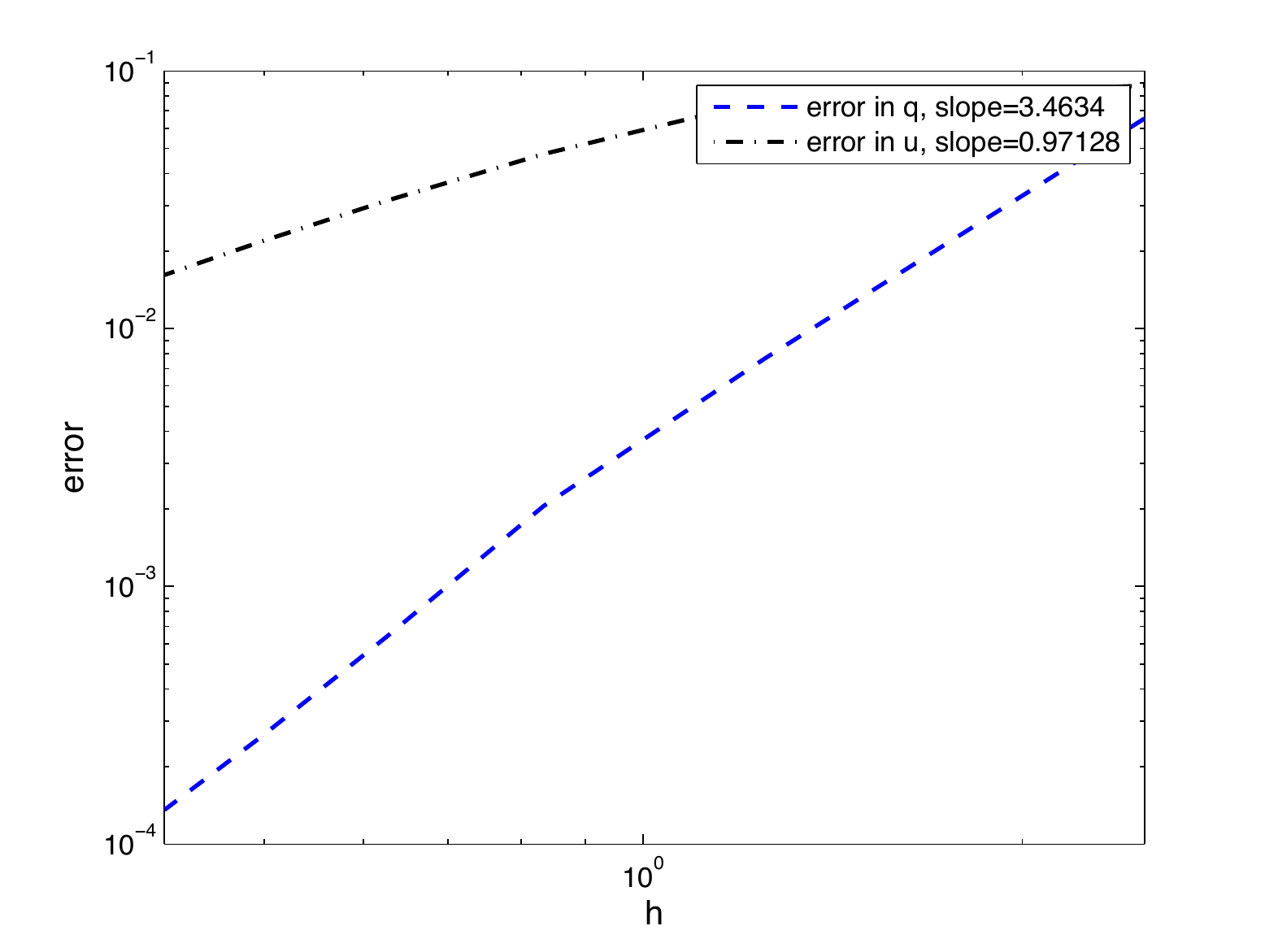}
      \caption{cost function \eqref{eq:docp:hager:cntrl3:cost4}}
      \label{fig:HagerD3U3J4}
   \end{subfigure}
   \caption{Convergence behavior of the discrete solution for the discrete cost functions given in \eqref{eq:docp:hager:cntrl3}.}
   \label{fig:HagerD3U3}
\end{figure}

\begin{figure}
   \centering
   \begin{subfigure}[b]{0.4\textwidth}
      \includegraphics[width=\textwidth]{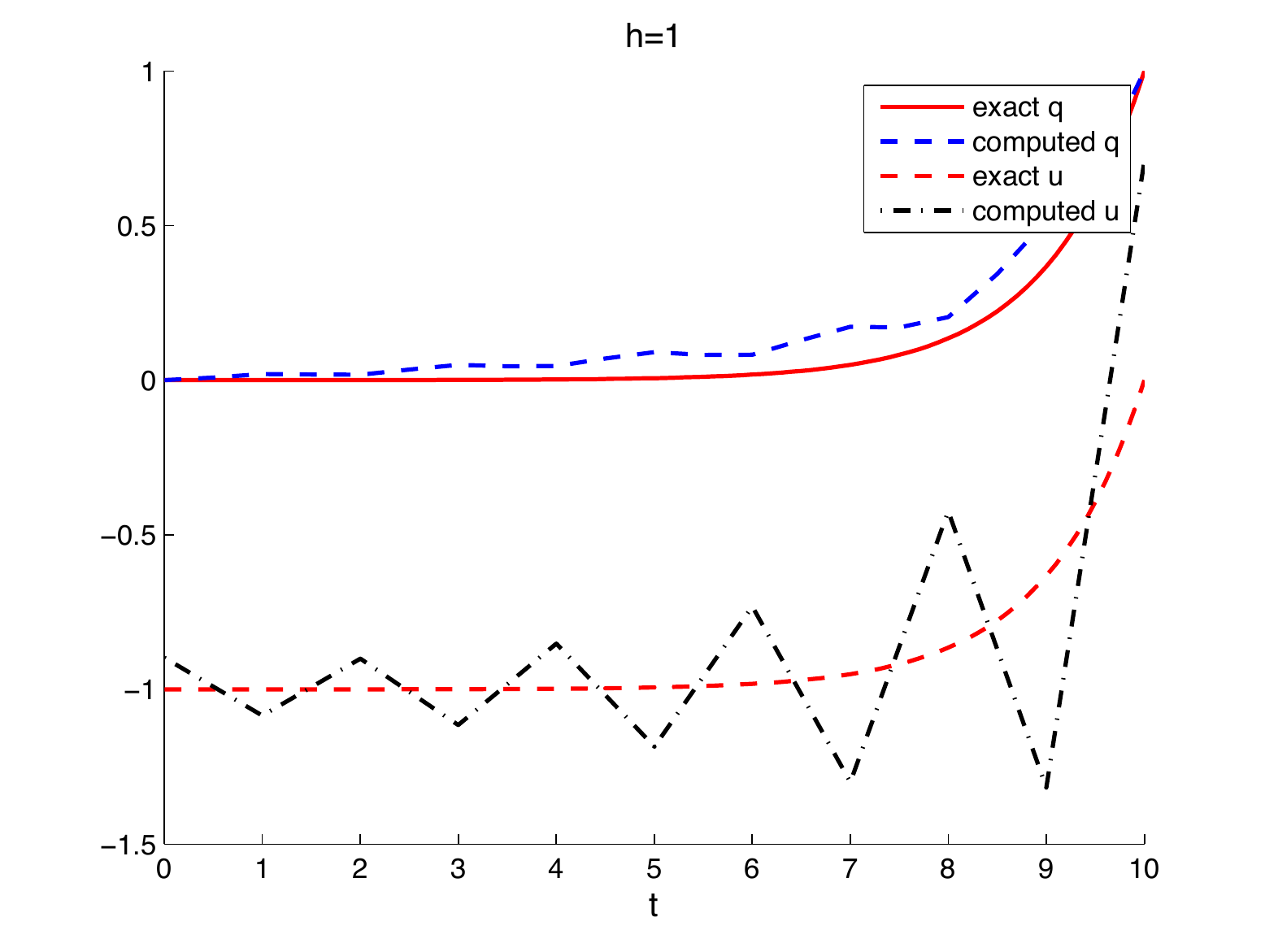}
      \caption{cost function \eqref{eq:docp:hager:cntrl2:cost1}}
      \label{fig:HagerD3U2J1}
   \end{subfigure}
   \hfill
   \begin{subfigure}[b]{0.4\textwidth}
      \includegraphics[width=\textwidth]{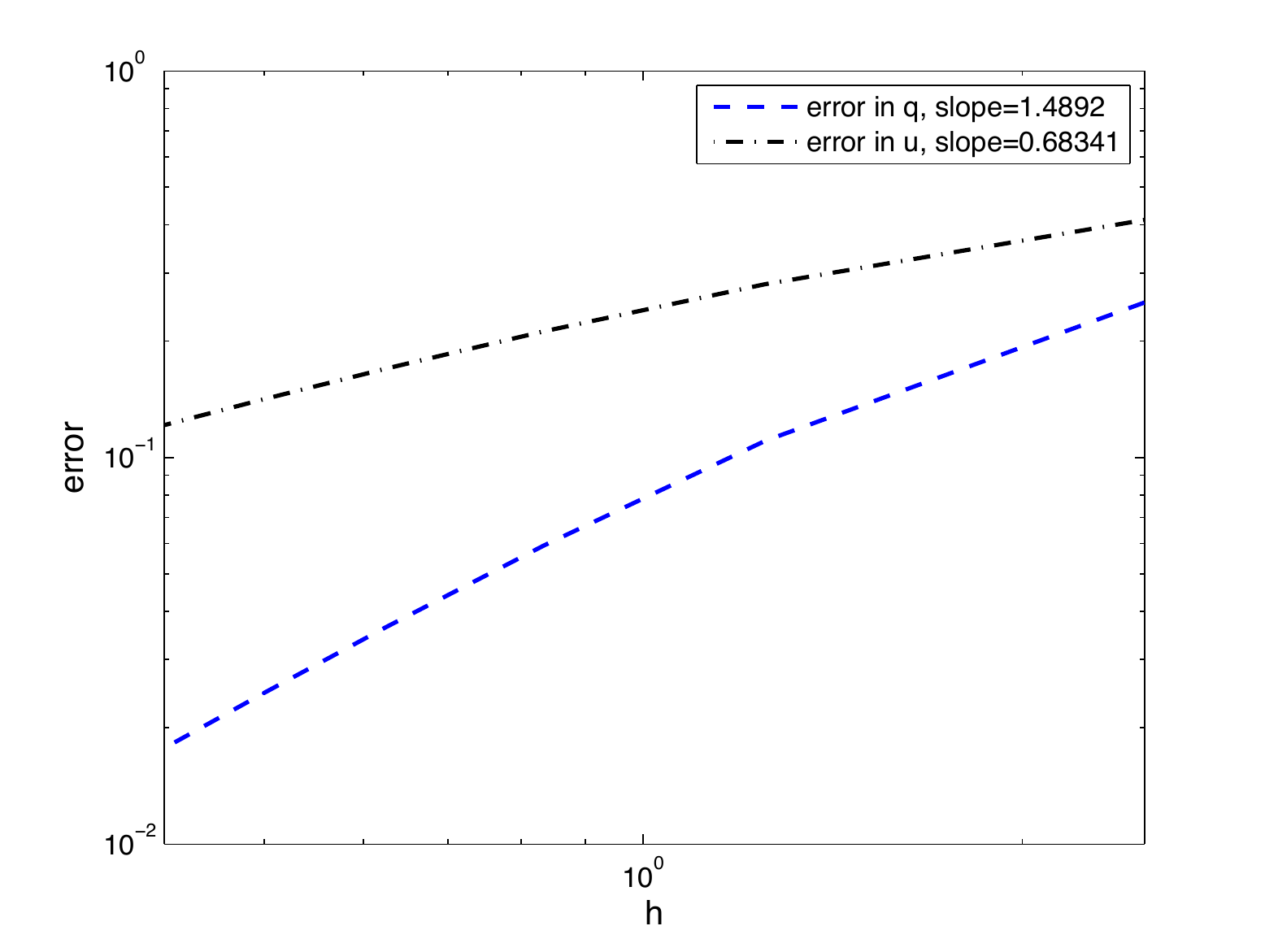}
      \caption{cost function \eqref{eq:docp:hager:cntrl2:cost2}}
      \label{fig:HagerD3U2J2}
   \end{subfigure}
   \\
   \begin{subfigure}[b]{0.4\textwidth}
      \includegraphics[width=\textwidth]{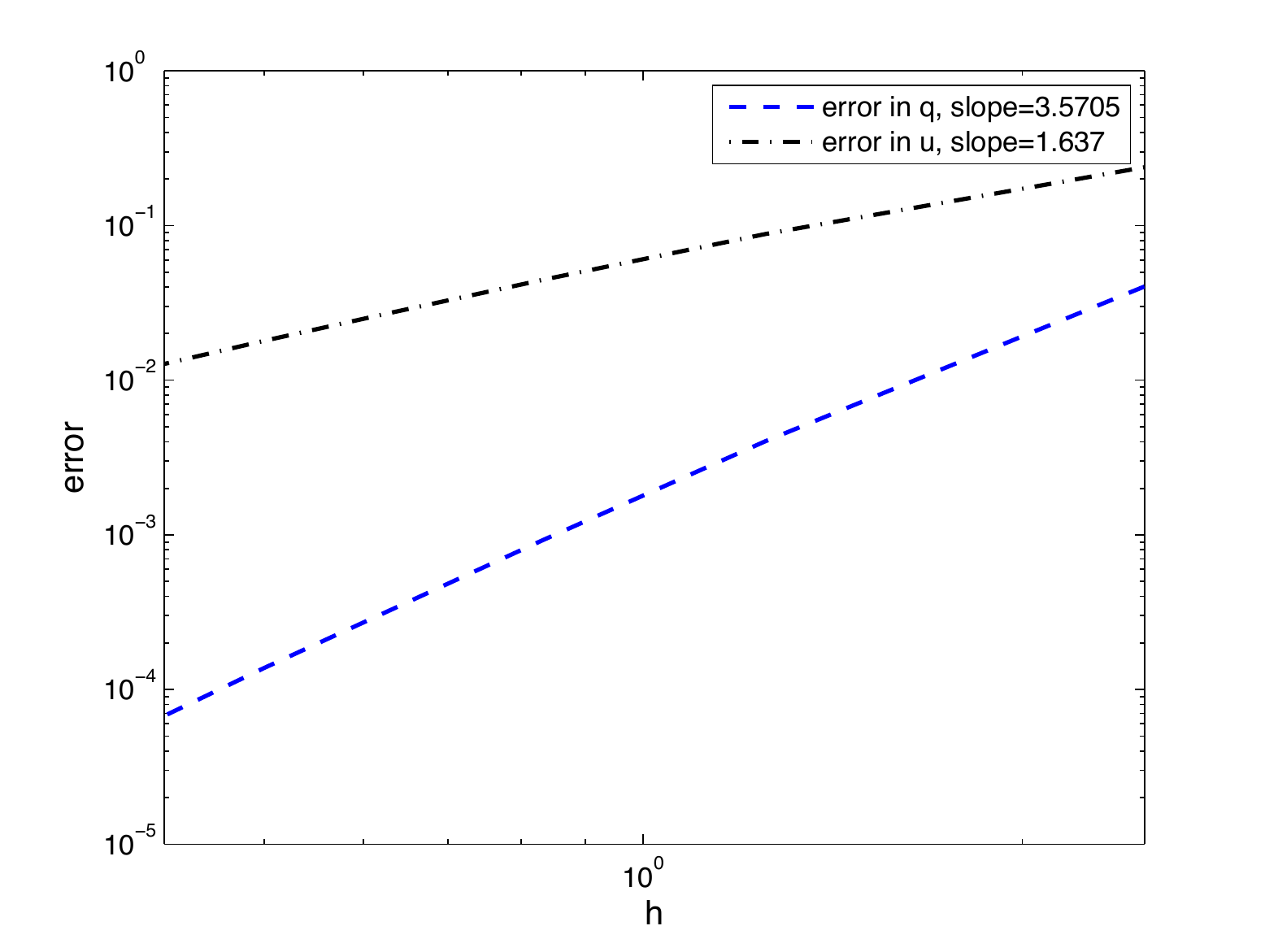}
      \caption{cost function \eqref{eq:docp:hager:cntrl2:cost3}}
      \label{fig:HagerD3U2J3}
   \end{subfigure}
   \hfill
   \begin{subfigure}[b]{0.4\textwidth}
      \includegraphics[width=\textwidth]{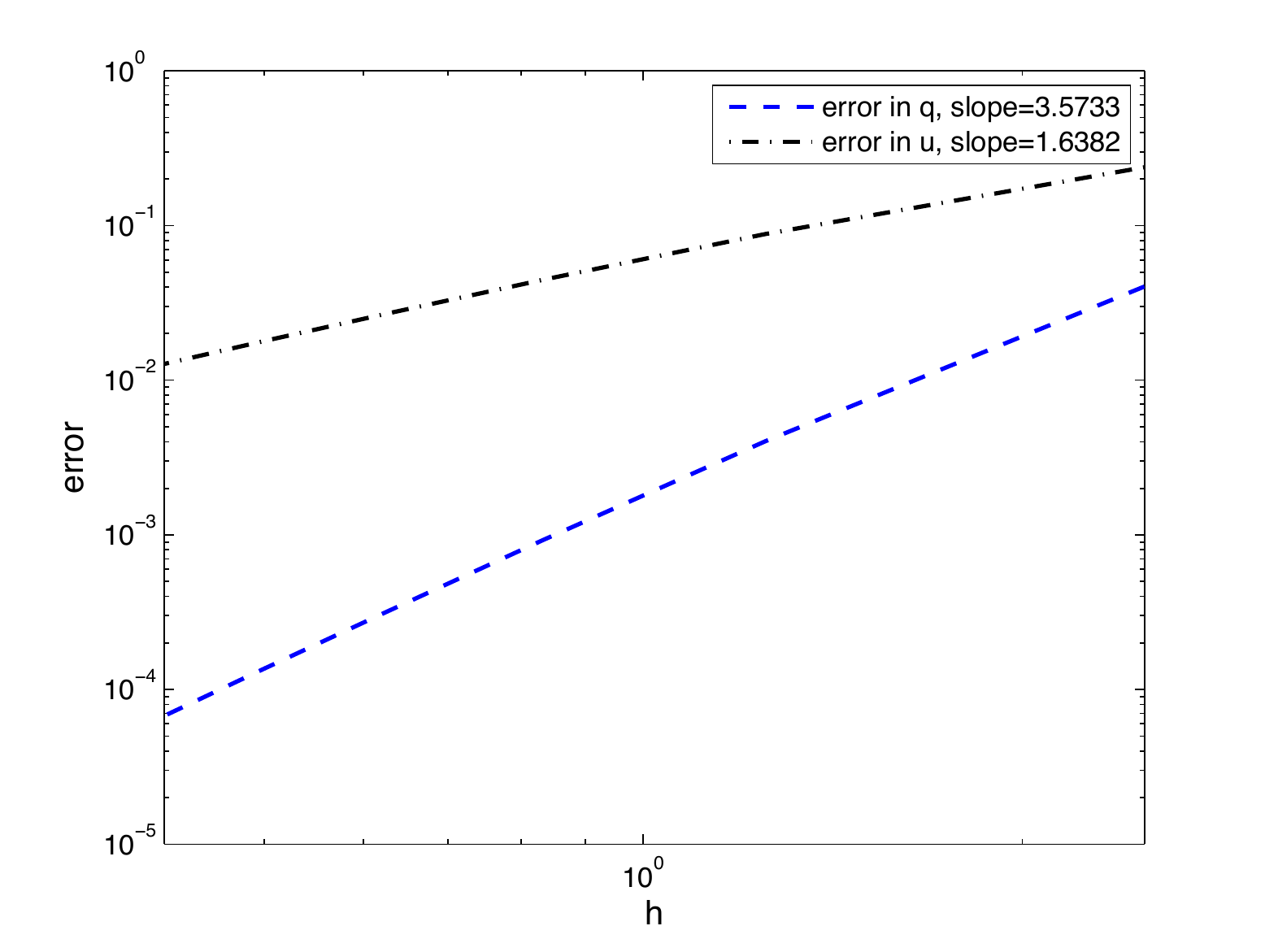}
      \caption{cost function \eqref{eq:docp:hager:cntrl2:cost4}}
      \label{fig:HagerD3U2J4}
   \end{subfigure}
   \caption{Convergence behavior of the discrete solution for the discrete cost functions given in \eqref{eq:docp:hager:cntrl2}.}
   \label{fig:HagerD3U2}
\end{figure}
\end{example}

\section{Commutation of discretization and dualization} \label{sec:conmutation}
In this section we investigate the equivalence (C) in Figure~\ref{fig:intro_comparison} for the special choice of sG discretization of the optimal control problem~\ref{prob:ocp}. To this end, we analyze and compare the adjoint systems for the continuous and the discrete optimal control problems.

Throughout the section we assume that all controls under consideration do not saturate the constraints. In other words, we assume that the optimal control is in the interior of the set of constraints on controls. This is obviously the case if $U=\bbR^m$. This assumption allows us to avoid the typical situation of bang-bang controls, and under slight extra conditions, to derive (from the Pontryagin Maximum Principle) extremal controls that are smooth functions of the state and costate. The necessary optimality conditions \eqref{eq:minprinc} are
\begin{subequations} \label{eq:general:ocp}
\begin{align}
\label{eq:general:ocp_1}
\dot\lambda &= -\nabla_qC-\lambda\cdot\nabla_qf-\psi\cdot\nabla_qg\,, &\lambda(T)&=\nabla_q\Phi(q(T),p(T))\,,\\
\label{eq:general:ocp_2}
\dot\psi &=  -\nabla_pC-\lambda\cdot\nabla_pf-\psi\cdot\nabla_pg\,,  &\psi(T) &= \nabla_q\Phi(q(T),p(T))\,,\\
\label{eq:general:ocp_3}
0 &= \nabla_uC +  \psi\cdot\nabla_ug\,.
\end{align}
\end{subequations}
If we use the sG integrator for the discretization of Problem~\ref{prob:ocp}, we obtain the discretized optimal control problem~\ref{prob:docp:sG}. To derive the necessary optimality conditions for the discretized optimal control problem, we introduce the discrete adjoint vectors (covectors in $\bbR^n$) $\lambda_0,\ldots,\lambda_N$, $\mu_0,\ldots,\mu_{N-1}$, $\psi_0$, $\Lambda_i^0,\ldots,\Lambda_i^{N-1}$, $\Psi_i^0,\ldots,\Psi_i^{N-1}$, $i=1,\ldots,s$, and define the discrete optimal control Lagrangian as
\begin{align}
\mathcal{L}_d =\,&  \sum_{k=0}^{N-1} \sum_{i=1}^s h b_i C_i^k +   \Phi(q_N,p_N) - \lambda_0\cdot (q_0-q^0) - \psi_0\cdot (p_0-p^0)  \nonumber\\
& + \sum_{k=0}^{N-1}  \left[ \mu_k\cdot \left(q_k - \sum_{j=1}^s\alpha^jQ_j^k\right) - \lambda_{k+1}\cdot \left(q_{k+1} - \sum_{j=1}^s\beta^jQ_j^k\right)\right. \nonumber\\
& +   \sum_{i=1}^s \Lambda_i^k\cdot \left( h f_i^k -  \sum_{j=1}^s a_{ij}Q_j^k\right) \nonumber\\
& + \left.  \Psi_i^k\cdot \left( h g_i^k -  \frac{\beta^ip_{k+1}-\alpha^ip_k}{\bar b_i} - \sum_{j=1}^s\bar a_{ij}P_j^k \right)\right], \label{eq:ocpLd}
\end{align}
where $C_i^k$ is a short notation for $C(Q_i^k,P_i^k,U_i^k)$ (analogously for $f_i^k$ and $g_i^k$).
The necessary optimality conditions (KKT equations) are derived by differentiation w.r.t.~the discrete variables $q_k,p_k$, $k=0,\ldots,N$ and $Q_i^k, P_i^k, U_i^k$, $k=0,\ldots,N-1$, $i=1,\ldots,s$,
which leads to
\begin{subequations}\label{eq:sG_adjoint_or}
\begin{align}
\text{for}\; k=0,\ldots,N-1:\qquad  \mu_k -\lambda_k &=0,\label{eq:mulambda}\\
 \nabla_q\Phi(q_N,p_N) - \lambda_N  &=0, \label{eq:sG_adjoint_or_2}\\
 -\psi_0 + \sum_{i=1}^s \frac{\alpha^i}{\bar{b}_i}\Psi_i^0&=0, \label{eq:sG_adjoint_or_3}\\
 \text{for}\; k=1,\ldots,N-1:\qquad \sum_{i=1}^s \frac{\alpha_i}{\bar b_i}\Psi_i^k  - \sum_{i=1}^s \frac{\beta^i}{\bar b_i}\Psi_i^{k-1} &=0, \label{eq:sG_adjoint_or_4}\\
\nabla_p\Phi(q_N,p_N)- \sum_{i=1}^s \frac{\beta^i}{\bar{b}_i}\Psi_i^{N-1} &=0, \label{eq:sG_adjoint_or_5}\\
\frac1h\bigg(-\alpha^i\mu_k + \beta^i\lambda_{k+1} - \sum_{j=1}^sa_{ji}\Lambda_j^k\bigg) + b_i\nabla_qC_i^k + \Lambda_i^k\cdot\nabla_qf_i^k + \Psi_i^k\cdot\nabla_qg_i^k&=0, \label{eq:sG_adjoint_or_6}\\
-\frac1h\sum_{j=1}^s\bar{a}_{ji}\Psi_j^k + b_i\nabla_pC_i^k + \Lambda_i^k\cdot\nabla_pf_i^k + \Psi_i^k\cdot\nabla_pg_i^k &=0, \label{eq:sG_adjoint_or_7}\\
  b_i\nabla_uC_i^k + \Psi_i^k\cdot\nabla_ug_i^k &= 0, \label{eq:sG_adjoint_or_8}
\end{align}
with $k=0,\ldots,N-1, i=1,\ldots,s$, for the last three equations.
\end{subequations}
\begin{subequations}
We transform the necessary optimality conditions by defining
\begin{gather}
\label{eq:defgammchi}
\Gamma_i^k := \Lambda_i^k/\bar{b}_i \quand \chi_i^k := \Psi_i^k/\bar{b}_i \quad\text{for}\quad k=0,\ldots,N-1,\, i=1,\ldots,s,\\
\label{eq:psi+-}
\psi_k^- := \sum_{i=1}^s\alpha^i\chi_i^k,\, k=0,\ldots,N-1,\quand  \psi_k^+  := \sum_{i=1}^s\beta^i\chi_i^{k-1} ,\, k=1,\ldots,N,
\end{gather}
such that Equation~\eqref{eq:sG_adjoint_or_4} reduces to $\psi_k^- = \psi_k^+ :=\psi_k$.
\end{subequations}
By eliminating the variables $\mu_0,\ldots,\mu_{N-1}$ with Equation~\eqref{eq:mulambda} and by exploiting the conditions on the coefficients $b_ia_{ij}+\bar b_j\bar a_{ji}=0$ and $b_i = \bar{b}_i$, we obtain the following discrete adjoint system
\begin{subequations} \label{eq:sG_adjointgeneral}
\begin{align}
\label{eq:sG_adjointgeneral_1}
\psi_k =& \sum_{j=1}^s\alpha^j\chi_j^k\,,\\
\label{eq:sG_adjointgeneral_2}
\psi_{k+1} =& \sum_{j=1}^s\beta^j\chi_j^k\,,\\
\label{eq:sG_adjointgeneral_3}
-\nabla_qC_i^k-\Gamma_i^k\cdot\nabla_qf_i^k-\chi_i^k\cdot\nabla_qg_i^k =& \frac{\beta^i\lambda_{k+1}-\alpha^i\lambda_k}{h\bar b_i} + \frac1h\sum_{j=1}^s\bar a_{ij}\Gamma_j^k\,,\\
\label{eq:sG_adjointgeneral_4}
-\nabla_pC_i^k-\Gamma_i^k\cdot\nabla_pf-\chi_i^k\cdot\nabla_pg_i^k =& \frac1h\sum_{j=1}^sa_{ij}\chi_j^k\,,\\
\label{eq:sG_adjointgeneral_5}
\nabla_uC_i^k+\chi_i^k\cdot\nabla_ug_i^k =& 0\,,
\end{align}
\text{for $k=0,\ldots,N-1,\, i=1,\ldots,s$, and with final conditions}
\begin{equation}
\label{eq:sG_adjointgeneral_6}
\lambda_N  = \nabla_q\Phi(q_N,p_N) \quand \psi_N= \nabla_p\Phi(q_N,p_N)\,,
\end{equation}
where
\begin{equation}
\label{eq:coeffab}
b_ia_{ij}+\bar b_j\bar a_{ji}=0 \quand b_i=\bar b_j\,.
\end{equation}
\end{subequations}
Note that the adjoint scheme~\eqref{eq:sG_adjointgeneral_1}-\eqref{eq:sG_adjointgeneral_4} together with the final constraints~\eqref{eq:sG_adjointgeneral_6} and the conditions on the coefficients \eqref{eq:coeffab} is exactly the symplectic Galerkin integrator applied to the adjoint system~\eqref{eq:general:ocp_1}-\eqref{eq:general:ocp_2}. To ensure that the discrete adjoint system~\eqref{eq:sG_adjointgeneral} is indeed equivalent to the necessary optimality conditions defined in \eqref{eq:sG_adjoint_or} we show the following proposition.

\begin{proposition}
\newcommand{\brk}{\allowbreak}
If $\bar{b}_i>0$ for each $i$, then the necessary optimality conditions~\eqref{eq:sG_adjoint_or} and the discrete adjoint system~\eqref{eq:sG_adjointgeneral} are equivalent.
That is, if $(\mu_0,\ldots,\mu_{N-1},\brk\Lambda_i^0,\ldots,\Lambda_i^{N-1},\brk\Psi_i^0,\ldots,\Psi_i^{N-1})$, $i=1,\ldots,s$, satisfy \eqref{eq:sG_adjoint_or}, then \eqref{eq:sG_adjointgeneral} hold for $(\psi_k,\Gamma_i^k,\brk\chi_i^k)$ defined in \eqref{eq:defgammchi} and \eqref{eq:psi+-}. Conversely, if $(\psi_1,\ldots,\psi_N,\brk\Gamma_i^0,\ldots,\Gamma_i^{N-1},\brk\chi_i^0,\ldots,\brk\chi_i^{N-1})$, $i=1,\ldots,s$, satisfy \eqref{eq:sG_adjointgeneral}, then \eqref{eq:sG_adjoint_or} hold for $(\mu_k,\Lambda_i^k,\Psi_i^k)$ defined in \eqref{eq:mulambda} and \eqref{eq:defgammchi}.
\end{proposition}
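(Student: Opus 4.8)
The plan is to verify that the affine change of variables given by \eqref{eq:mulambda}, \eqref{eq:defgammchi} and \eqref{eq:psi+-} is a bijection carrying the solution set of the KKT system \eqref{eq:sG_adjoint_or} onto that of the discrete adjoint system \eqref{eq:sG_adjointgeneral}, equation by equation. The role of the hypothesis $\bar b_i>0$ (only $\bar b_i\neq0$ is actually used) is to make the scalings $\Gamma_i^k=\Lambda_i^k/\bar b_i$ and $\chi_i^k=\Psi_i^k/\bar b_i$ invertible; the substitution $\mu_k=\lambda_k$ is exactly \eqref{eq:mulambda}, and $\psi_k$ is obtained from the $\chi_i^k$ through \eqref{eq:psi+-}. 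Thus the proof reduces to a bookkeeping check in which the coefficient identities \eqref{eq:coeffab}, that is $b_ia_{ij}+\bar b_j\bar a_{ji}=0$ and $b_i=\bar b_i$, are used repeatedly.

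First I would dispatch the boundary and algebraic relations. The two conditions in \eqref{eq:sG_adjointgeneral_6} are literally \eqref{eq:sG_adjoint_or_2} and, after inserting \eqref{eq:psi+-} at $k=N$ into \eqref{eq:sG_adjoint_or_5}, the identity $\psi_N=\nabla_p\Phi(q_N,p_N)$. Equations \eqref{eq:sG_adjointgeneral_1}--\eqref{eq:sG_adjointgeneral_2} are just the definitions $\psi_k=\psi_k^-$ and $\psi_{k+1}=\psi_{k+1}^+$ spelled out, so they hold precisely when $\psi_k^-=\psi_k^+$; for an interior index $1\le k\le N-1$ this is \eqref{eq:sG_adjoint_or_4} divided by $\bar b_i$, and for $k=0$ it is \eqref{eq:sG_adjoint_or_3}. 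Finally \eqref{eq:sG_adjoint_or_8} becomes \eqref{eq:sG_adjointgeneral_5} upon dividing by $b_i=\bar b_i$ and substituting $\Psi_i^k=\bar b_i\chi_i^k$. Every one of these steps is reversible, so the corresponding equations of \eqref{eq:sG_adjoint_or} are recovered from \eqref{eq:sG_adjointgeneral} by reading them backwards.

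For the ``dynamical'' equations \eqref{eq:sG_adjoint_or_6}--\eqref{eq:sG_adjoint_or_7} I would substitute $\mu_k=\lambda_k$, $\Lambda_j^k=\bar b_j\Gamma_j^k$, $\Psi_i^k=\bar b_i\chi_i^k$ and divide by $b_i=\bar b_i$. In \eqref{eq:sG_adjoint_or_6} the only nontrivial manipulation is to rewrite $-\tfrac1{hb_i}\sum_j a_{ji}\bar b_j\Gamma_j^k$, using the transposed instance $b_ja_{ji}+\bar b_i\bar a_{ij}=0$ of \eqref{eq:coeffab}, as $+\tfrac1h\sum_j\bar a_{ij}\Gamma_j^k$; what remains is exactly \eqref{eq:sG_adjointgeneral_3}. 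The same computation applied to \eqref{eq:sG_adjoint_or_7}, now using $b_ia_{ij}+\bar b_j\bar a_{ji}=0$ to convert $-\tfrac1{hb_i}\sum_j\bar a_{ji}\bar b_j\chi_j^k$ into $+\tfrac1h\sum_j a_{ij}\chi_j^k$, produces \eqref{eq:sG_adjointgeneral_4}. Since multiplication by the nonzero scalars $\bar b_i$ and substitution of the coefficient identities are equivalences, the converse implications hold as well.

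The one point deserving care is the well-definedness of $\psi_k$: in passing from \eqref{eq:sG_adjoint_or} to \eqref{eq:sG_adjointgeneral} one must check that $\psi_k^-=\sum_i\alpha^i\chi_i^k$ and $\psi_k^+=\sum_i\beta^i\chi_i^{k-1}$, which are a priori distinct linear combinations of the $\chi$'s, agree for every interior $k$, and this is precisely what \eqref{eq:sG_adjoint_or_4} guarantees (the endpoint values $\psi_0$ and $\psi_N$ being fixed by \eqref{eq:sG_adjoint_or_3} and \eqref{eq:sG_adjoint_or_5}); conversely, the $\psi_k$ are made mutually consistent by \eqref{eq:sG_adjointgeneral_1}--\eqref{eq:sG_adjointgeneral_2} themselves, so nothing extra is required. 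Beyond that, the argument is a routine but slightly tedious index chase, the main hazard being to keep the transpositions $a_{ij}\leftrightarrow a_{ji}$ and $\bar a_{ij}\leftrightarrow\bar a_{ji}$ straight when invoking \eqref{eq:coeffab}.
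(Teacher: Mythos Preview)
Your proposal is correct and follows essentially the same approach as the paper's own proof: a direct equation-by-equation verification that the invertible change of variables \eqref{eq:mulambda}, \eqref{eq:defgammchi}, \eqref{eq:psi+-} carries \eqref{eq:sG_adjoint_or} to \eqref{eq:sG_adjointgeneral}, using the coefficient identities \eqref{eq:coeffab} for the dynamical equations and the matching $\psi_k^-=\psi_k^+$ for the momentum-like adjoints. Your write-up is in fact slightly more explicit than the paper's (you spell out which transposed instance of \eqref{eq:coeffab} is used where and emphasize reversibility), but there is no substantive difference in strategy.
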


\begin{proof}
\newcommand{\brk}{\allowbreak}
We already derived the adjoint system~\eqref{eq:sG_adjointgeneral} starting from the necessary optimality conditions~\eqref{eq:sG_adjoint_or}. We now suppose that $(\psi_1,\ldots,\psi_N,\brk\Gamma_i^0,\ldots,\Gamma_i^{N-1},\brk\chi_i^0,\ldots,\brk\chi_i^{N-1})$, $i = 1,\ldots,s$, satisfy the adjoint system~\eqref{eq:sG_adjointgeneral}. Equation~\eqref{eq:mulambda} holds by assumption. The condition for $\lambda_N$ in \eqref{eq:sG_adjointgeneral_6} and \eqref{eq:sG_adjoint_or_2} are identical. The condition for $\psi_N$ in \eqref{eq:sG_adjointgeneral_6} together with Equation~\eqref{eq:sG_adjointgeneral_2} for $k=N-1$ and the definition~\eqref{eq:defgammchi} yields \eqref{eq:sG_adjoint_or_5} whereas Equation~\eqref{eq:sG_adjointgeneral_1} for $k=0$ together with definition~\eqref{eq:defgammchi} yields \eqref{eq:sG_adjoint_or_3}. By subtracting Equations~\eqref{eq:sG_adjointgeneral_1} and \eqref{eq:sG_adjointgeneral_2} for the same index $k$ and using definition~\eqref{eq:defgammchi} we obtain \eqref{eq:sG_adjoint_or_4}. Finally, by taking the condition~\eqref{eq:coeffab} on the coefficients into account, \eqref{eq:sG_adjointgeneral_3}-\eqref{eq:sG_adjointgeneral_5} and definition \eqref{eq:defgammchi} yield \eqref{eq:sG_adjoint_or_6}-\eqref{eq:sG_adjoint_or_8}, respectively.
\end{proof}

With the classical Legendre assumption, \ie\  $(\pp*[^2\calH]{u^2})(q^*,p^*,u^*,\lambda,\psi,1)$ is a positive definite symmetric matrix, with Equation~\eqref{eq:general:ocp_3} $u$ can be expressed as function of the states and the adjoints, $u=u(q,p,\lambda,\psi)$. We denote by $\nu$ and $\eta$ the functions defined by
\begin{align*}
 \nu(q,p,\lambda,\psi) &= \left(-\nabla_qC(q,p,u)-\lambda\cdot\nabla_qf(q,p)-\psi\cdot\nabla_qg(q,p,u)\right)|_{u=u(q,p,\lambda,\psi)}\,,\\
\eta(q,p,\lambda,\psi) &= \left(-\nabla_pC(q,p,u)-\lambda\cdot\nabla_pf(q,p)-\psi\cdot\nabla_pg(q,p,u)\right)|_{u=u(q,p,\lambda,\psi)}\,.
\end{align*}
With some abuse of notation, let $g(q,p,\lambda,\psi)$ denote the function $g(q,p,u(q,p,\lambda,\psi))$.
In the case where the control has the form $U_i^k = u(Q_i^k,P_i^k,\Gamma_i^k,\chi_i^k)$, the state and adjoint scheme based on the symplectic Galerkin integrator can be expressed as
\begin{subequations}\label{eq:state_adjoint_discrete}
\begin{align}
  q_k &= \sum_{j=1}^s\alpha^jQ^k_j\,,\quad  q_{k+1} = \sum_{j=1}^s\beta^jQ^k_j\,, \quad \psi_k = \sum_{j=1}^s\alpha^j\chi_j^k\,,\quad   \psi_{k+1} = \sum_{j=1}^s\beta^j\chi_j^k\,,\\
f^k_i &= \frac{1}{h}\sum_{j=1}^sa_{ij}Q^k_j\,,\quad  g^k_i = \frac{\beta^ip_{k+1}-\alpha^ip_k}{h\bar b_i} + \frac1h\sum_{j=1}^s\bar a_{ij}P^k_j\,,\\
\eta^k_i &= \frac{1}{h}\sum_{j=1}^sa_{ij}\chi_j^k\,, \quad \nu^k_i   = \frac{\beta^i \lambda_{k+1}-\alpha^i \lambda_k}{h\bar b_i} + \frac1h\sum_{j=1}^s\bar a_{ij}\Gamma_j^k\,,
\end{align}
$k=0,\ldots,N-1,\, i=1,\ldots,s$,
\begin{equation}
q_0=q^0,\quad p_0 = p^0,\quad \lambda_N  = \nabla_q\Phi(q_N,p_N), \quad \psi_N= \nabla_p\Phi(q_N,p_N),
\end{equation}
\end{subequations}
where $b_ia_{ij}+\bar b_j\bar a_{ji}=0$ and $b_i=\bar b_j$ and where $f_i^k$ and $g^k_i$ are short notations for $f(Q_i^k,P_i^k)$ and $g(Q^k_i,P^k_i,\Gamma_i^k,\chi_i^k)$ (analogously for $\eta_i^k$ and $\nu^k_i$).

Scheme~\eqref{eq:state_adjoint_discrete} can be viewed as symplectic Galerkin discretization of the two-point boundary value problem
\begin{subequations}\label{eq:state_adjoint}
\begin{align}
\dq &= f(q,p) \,,& q(0) &= q^0 \,, \\
\dot p &= g(q,p,\lambda,\psi) \,,& p(0) &= p^0 \,,\\
\dot\lambda &= \nu(q,p,\lambda,\psi)  \,,& \lambda(T) &= \nabla_q\Phi(q(T),p(T)) \,,\\
\dot\psi &= \eta(q,p,\lambda,\psi) \,,& \psi(T) &= \nabla_q\Phi(q(T),p(T)) \,,
\end{align}
\end{subequations}
where the variables $(q,\psi)$ and $(p,\lambda)$ are treated in the same way, respectively. Since the same discrete scheme is used for state and adjoint system, the orders of approximation coincide. This leads to the following statement.

\begin{theorem}[Commutation property]\label{th:commutation}
Given the $(OCP)$ \ref{prob:ocp}, besides of \ref{thm:control:conv:regularity}, \ref{thm:control:conv:coercivity}, \ref{thm:control:conv:uniqueness} from Theorem~\ref{thm:control:conv}, we assume that $(\pp*[^2\calH]{u^2})(q^*,p^*,u^*,\lambda,\psi,1)$ is a positive definite symmetric matrix. If a convergent symplectic Galerkin method with $b_i>0,\, i=1,\ldots,s,$ is used for the discretization of the state system, dualization and discretization commute, \ie\ the dualization of Problem~\ref{prob:docp:sG} coincides with the sG discretization of the boundary value problem~\eqref{eq:state_adjoint}.
\end{theorem}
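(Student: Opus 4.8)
The plan is to chain together the pieces already assembled above, so that the proof reduces to two identifications: (i) the dualization of Problem~\ref{prob:docp:sG} equals the discrete adjoint system \eqref{eq:sG_adjointgeneral}, and (ii) that system, once the discrete controls are eliminated, is nothing but the sG integrator \eqref{eq:sG_general} applied to the Pontryagin boundary value problem \eqref{eq:state_adjoint}, which in turn is exactly the dualization of Problem~\ref{prob:ocp}. Combining (i) and (ii) is precisely the asserted commutation.

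First I would observe that the hypothesis $b_i>0$ together with the coefficient identities \eqref{eq:coeffab} yields $\bar b_i>0$, so the preceding Proposition applies: the KKT system \eqref{eq:sG_adjoint_or}, obtained by setting to zero the partial derivatives of the discrete optimal control Lagrangian \eqref{eq:ocpLd} --- that is, the dualization of Problem~\ref{prob:docp:sG} --- is equivalent, through the substitutions \eqref{eq:defgammchi}--\eqref{eq:psi+-}, to the system \eqref{eq:sG_adjointgeneral}. Next I would invoke the Legendre-type assumption: since $\partial^2\calH/\partial u^2$ is positive definite at the optimum, the stationarity condition \eqref{eq:general:ocp_3} (equivalently \eqref{eq:sG_adjointgeneral_5}) is solvable by the implicit function theorem for an interior control $u=u(q,p,\lambda,\psi)$, whence on the discrete level $U_i^k=u(Q_i^k,P_i^k,\Gamma_i^k,\chi_i^k)$. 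Substituting this into the discrete state equations \eqref{eq:docp:sG:b}--\eqref{eq:docp:sG:c} of Problem~\ref{prob:docp:sG} and into \eqref{eq:sG_adjointgeneral}, and introducing the functions $\nu$, $\eta$ and $g(\cdot,\cdot,\lambda,\psi)$ exactly as defined just before the theorem, one lands on scheme \eqref{eq:state_adjoint_discrete}. The same elimination applied on the continuous side to \eqref{eq:general:ocp} --- which is the content of the Pontryagin principle \eqref{eq:minprinc} with $\rho_0=1$, the abnormal case being excluded since no terminal state constraint is present --- produces the two-point boundary value problem \eqref{eq:state_adjoint}.

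The core of the argument is then to check that \eqref{eq:state_adjoint_discrete} is precisely the sG discretization of \eqref{eq:state_adjoint}. For this I would exploit the Hamiltonian structure of \eqref{eq:state_adjoint}: with respect to the reduced Hamiltonian $\widetilde\calH(q,p,\lambda,\psi)=\calH(q,p,u(q,p,\lambda,\psi),\lambda,\psi)$, the substitution being harmless along optimal arcs by the envelope property $\partial\calH/\partial u=0$, one has $\dq=\partial\widetilde\calH/\partial\lambda$, $\dot\lambda=-\partial\widetilde\calH/\partial q$ and $\dot p=\partial\widetilde\calH/\partial\psi$, $\dot\psi=-\partial\widetilde\calH/\partial p$, so $(q,\lambda)$ and $(p,\psi)$ are canonically conjugate pairs. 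Consequently, in the sG discretization $q$ and $\psi$ are both ``configuration-type'' variables, carrying the degree-$(s-1)$ interpolant through the micro-nodes $Q_i$, resp.\ $\chi_i$, with source/target weights $\alpha^j,\beta^j$ and collocation-derivative coefficients $a_{ij}$, while $p$ and $\lambda$ are the conjugate ``momentum-type'' variables, carrying $P_i$, resp.\ $\Gamma_i$, through the dual coefficients $\bar a_{ij},\bar b_i$; and the single set of identities \eqref{eq:coeffab} governs both pairs at once. Matching terms line by line against \eqref{eq:sG_general} and its dual then shows that the sG discretization of \eqref{eq:state_adjoint} coincides with \eqref{eq:state_adjoint_discrete}. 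Chaining (i) and this identification yields the commutation; and since one and the same convergent sG scheme with $b_i>0$ serves for both the state and the adjoint system, Theorem~\ref{thm:control:conv} additionally forces the two discretizations to share the same order, so that the covector mapping can be taken to be the identity.

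The step I expect to be the main obstacle is this last verification. It is not deep, but it is where care is required: one must confirm that the coefficients $(\alpha^j,\beta^j,a_{ij},\bar a_{ij},b_i,\bar b_i)$ appearing in the adjoint half of \eqref{eq:state_adjoint_discrete} are literally those produced by \eqref{eq:sG_general} on a generic conjugate pair and that they enter in the same pattern --- which rests entirely on the symplectic relations \eqref{eq:coeffab}, whose very purpose is to make the ``configuration'' and ``momentum'' discretizations of conjugate variables compatible. Everything else follows mechanically from the preceding Proposition and from the definitions of $f$, $g$, $\nu$, $\eta$ and $u(q,p,\lambda,\psi)$ set out before the statement.
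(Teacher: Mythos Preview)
Your proposal is correct and follows essentially the same route as the paper: the theorem is stated as a summary of the derivation carried out in Section~\ref{sec:conmutation}, namely the passage \eqref{eq:sG_adjoint_or}\,$\Rightarrow$\,\eqref{eq:sG_adjointgeneral} via the Proposition, elimination of $U_i^k$ through the Legendre assumption to obtain \eqref{eq:state_adjoint_discrete}, and the identification of \eqref{eq:state_adjoint_discrete} with the sG scheme applied to \eqref{eq:state_adjoint}. Your additional remark that $(q,\lambda)$ and $(p,\psi)$ are canonically conjugate for the reduced Hamiltonian $\widetilde{\calH}$ is a helpful justification for why $q,\psi$ receive the ``configuration'' coefficients $(\alpha^j,\beta^j,a_{ij})$ while $p,\lambda$ receive the ``momentum'' coefficients $(\bar a_{ij},\bar b_i)$; the paper simply asserts that ``the variables $(q,\psi)$ and $(p,\lambda)$ are treated in the same way'' without spelling this out. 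One minor slip: the claim that Theorem~\ref{thm:control:conv} ``forces the two discretizations to share the same order'' is not what that theorem says --- order preservation follows directly from the fact that the \emph{same} sG scheme is used for state and adjoint, as the paper notes just before the statement.
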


\begin{remark}
In spirit of the Covector Mapping Principle (see \cite{GoRoKaFa08}), the order-preserving map between the adjoint variables corresponding to the dualized discrete problem (KKT) and the discretized dual problem (discrete PMP) is given by Equation~\eqref{eq:defgammchi}.
\end{remark}

\begin{remark}
If the controls do not saturate the constraints, \ie\ control constraints are active, the optimal solution is
typically only Lipschitz continuous. Then we expect
analogously to \cite{DoHa00} that convergence rates are limited to order two even for higher
order approximation schemes.
\end{remark}

\section{Conclusions}\label{sec:conclusion}
In this work, we investigate the application of high order variational integrators to the numerical solution of optimal control problems of mechanical systems. We derive two different schemes of high order variational integrators, the spRK and the sG method, which are both used for the discretization of a Lagrangian optimal control problem. The convergence of the primal variables of the resulting discrete optimal control problems is proven. Furthermore, the commutation of dualization and discretization for the sG method is shown, which extends the result in \cite{ObJuMa10} to another class of variational integrators that fulfills this commutation property and directly implies that the Covector Mapping Principle is satisfied. In particular, due to the commutation, not only the order of the adjoint scheme but also the discretization method itself is preserved and in contrast to Legendre pseudospectral methods or classical Runge-Kutta methods, no additional closure conditions (see \cite{GoRoKaFa08}) or conditions on the Runge Kutta coefficients (see \cite{Ha00}), respectively, are required.

The fulfillment of the Covector Mapping Principle provides a convenient way to prove the convergence of the dual variables (as done, for example, in \cite{Ha00}). With Theorem~\ref{th:commutation} the solution of the discretized direct problem coincides with the discrete solution of a shooting method applied to the necessary conditions of optimality. By showing the convergence of the shooting approach, we can conclude directly the convergence of the direct approach. The convergence proof for the adjoint variables is left for future work.

In the present paper we restricted ourselves to optimal control problems without any constraint on the final state. If we consider more general optimal control problems, involving constraints on the final state, then we expect that we will have to use the general conjugate point theory (see \cite{BoCaTr07}), in order to provide second-order conditions for optimality, related with the classical sensitivity analysis along a given optimal trajectory. Also, if there are some constraints on the final point then abnormal extremals may occur in the application of the Pontryagin Maximum Principle, which may raise a major problem in the analysis. Fortunately, it is known that abnormal minimizers do not exist under generic assumptions on the system and on the cost (see \cite{ChJeTr08}), and we expect that the results presented in this paper may hold in such a generic context. Otherwise the possible presence of abnormal minimizers is responsible for a loss of compactness (in particular, adjoint vectors do not stay in a compact anymore, see \cite{Tr00}), which may imply the failure of our method of proof, and it is not very clear then if we can expect that the Covector Mapping Principle hold true in that case.
These general considerations will be investigated in future work.
We stress again that, in the present paper, we have restricted ourselves to a more simple and tractable case. Once again, note that we have considered control-affine systems with (quasi)-quadratic costs, with assumptions ensuring the smoothness of optimal controls (as functions of the state and of the costate). As in \cite{Ha00}, our theory can be applied whenever there exist control constraints, provided the optimal controls under consideration belong to the interior of the set of constraints. Otherwise, in the general case controls may saturate the constraints, typically bang-bang controls do appear and then additional assumptions must be done in order to ensure a nice regularity of controls as functions of the state and of the costate. For instance, it is desirable to avoid chattering phenomena, in which a given optimal bang-bang control can have an infinite number of commutations over a compact interval. Also, we expect that one has to use the corresponding conjugate point theory in the bang-bang case. Such a theory does exist in the purely bang-bang case (see \cite{AgStZe98,MaOs04}) but then requires additional assumptions (ruling out, in particular, chattering). It can be noted that a general conjugate point theory, involving all possible subarcs -- free, bang, singular, boundary -- is still to be done. In order to get a general Covector Mapping Principle, such a complete theory is certainly required.

Another interesting issue is the application of more general symplectic and structure preserving methods in optimal control. Whereas the commutation property is shown for an already rich class of symplectic integrators (spRK and sG) it is still an open question, if it is satisfied for any variational and thus, any symplectic integrator (note that the classes of variational and symplectic integrators are identical, see \eg\ \cite{HaLuWa02}).

\section*{Acknowledgments}
This work has been partially supported by the European Union under the 7th Framework Programme FP7--PEOPLE--2010--ITN, grant agreement number 264735--SADCO, the Spanish MINECO under the National Reasearch Programme I+D+i 2010--2013, grant agreement number MTM2010-21186-C02-01. One of the authors, C.M.C., thanks the Universidad de Valladolid for its current postdoc position cofunded by the European Social Fund and the Junta de Castilla y León.

\bibliographystyle{siam}
\bibliography{hovi}
\end{document}